\numberwithin{equation}{section}
\newtheorem{lem}{Lemma}[section]
\newtheorem{pro}[lem]{Proposition}
\newtheorem{thm}[lem]{Theorem}
\newtheorem{cor}[lem]{Corollary}
\newcommand{\ms}{\medskip}
\newcommand{\R}{\mathbb{R}}
\newcommand{\bN}{\mathbb{N}}
\newcommand{\bZ}{\mathbb{Z}}
\newcommand{\bC}{\mathbb{C}}
\renewcommand{\H}{\mathscr{H}}
\renewcommand{\d}{\partial}
\newcommand{\dist}{\,\mathrm{dist}\,}
\newcommand{\sm}{\setminus}
\newcommand{\diam}{\mathrm{diam}}
\newcommand{\wt}{\widetilde}
\newcommand{\cY}{{\mathcal Y}}
\newcommand{\cE}{{\mathcal E}}
\newcommand{\ol}{\overline}
\newcommand{\wh}{\widehat}
\newcommand{\mres}{\mathbin{\vrule height 1.6ex depth 0pt width
0.13ex\vrule height 0.13ex depth 0pt width 1.3ex}}
\begin{document}

\title{Cantor sets with absolutely continuous harmonic measure}

\author{
Guy David\footnote{
G.~David was partially supported by the European Community H2020 grant GHAIA 777822,
and the Simons Foundation grant 601941, GD.
} 
\, Cole Jeznach\footnote{
C.~Jeznach was partially supported by the Simons Collaborations in MPS grant 563916, SM and Max Engelstein's NSF grant 2000288.
}
\, and Antoine Julia\footnote{
A.~Julia was supported by the Simons Foundation grant 601941, GD.
}
}

\newcommand{\Addresses}{{
  \bigskip
  \footnotesize

 Guy David, \textsc{Universit\'e Paris-Saclay, CNRS, 
 Laboratoire de math\'ematiques d'Orsay, 91405 Orsay, France} 
 \par\nopagebreak
  \textit{E-mail address}: \texttt{guy.david@universite-paris-saclay.fr} \\
  
 Cole Jeznach, \textsc{School of Mathematics, University of Minnesota, Minneapolis,
MN, 55455, USA} 
 \par\nopagebreak
  \textit{E-mail address}: \texttt{jezna001@umn.edu}\\
  
 Antoine Julia, \textsc{Universit\'e Paris-Saclay, CNRS, 
 Laboratoire de math\'ematiques d'Orsay, 91405 Orsay, France} 
 \par\nopagebreak
  \textit{E-mail address}: \texttt{antoine.julia@universite-paris-saclay.fr} \\
  
 }}

\maketitle

\abstract{
  We construct Ahlfors regular Cantor sets $K$ of small dimension in the plane, such that the Hausdorff measure on $K$ is equivalent to the harmonic measure associated to its complement. In particular the Green's function in $\R^2 \sm K$ satisfies $G^p(x)\simeq \dist(x,K)^\delta$ whenever $\dist(x,K)\le 1$ and $p$ is far from $K$. 

}
\tableofcontents

\section{Introduction}\label{S1}
Given a connected domain $\Omega$ in the plane, whose complement has positive capacity and a point $x\in \Omega$, a fundamental question of harmonic analysis is whether the harmonic measure $\omega^x$ of $\Omega$ with pole at $x$ is comparable to the Hausdorff measure on the boundary of $\Omega$, $\sigma:= \mathscr{H}^\delta \mres \partial \Omega$, where $\delta>0$ is the dimension of $\partial \Omega$. It can mean several things for measures to be comparable: do they have the same dimension? Are they mutually absolutely continuous? If so, is the density $d\omega^x/d \sigma$ bounded from above and away from zero?  

\medskip

Jones and Wolff proved in \cite{JonesWolff} 
that the dimension of $\omega^x$ is at most one, i.e.~there exists a set of Hausdorff dimension one which has 
full $\omega^x$ measure. This answers the question in case $\partial \Omega$ has a 
dimension larger than one (for instance if the set is bounded by a Koch snowflake). 
In higher ambient dimensions, the question is more subtle, but Azzam recently proved in \cite{Azzam2020} that if $E\subset \R^n$ has dimension $s>n-1$ and is $s$-Ahlfors regular, then the harmonic measure on the complement of $E$ has dimension smaller than $s$. 

\medskip

When the boundary of $\Omega \subset \R^n$ is of co-dimension $1$,
the mutual absolute continuity of $\omega^x$ with respect to 
$\sigma = \mathscr{H}^{n-1} \mres \partial \Omega$ is known to be mostly
a question of non-tangential accessibility and rectifiability. There is a large collection of sufficient conditions 
that imply the mutual absolute continuity of $\omega^x$ and $\sigma$; see for instance \cite{AHMMT2020Inventiones}. 
More recently, converse results were also found, and in particular Azzam 
et al.~proved in \cite{AHMMMTV2016GAFA} that if $\omega^x$
is absolutely continuous with respect to $\sigma$ on a piece of $\d\Omega$
with finite $\sigma$-measure, then this piece is $(n-1)$-rectifiable. We 
refer to \cite{AHMMMTV2016GAFA} for a complete review of this question.

\medskip

In this note, we mostly 
focus on boundary sets $K \subset \R^2$ of dimensions smaller than $1$. 
When $K = \d\Omega$ is a self-similar Cantor set of dimension $\delta\in (0,1)$, it 
was shown by Carleson \cite{Carleson1985}, 
Makarov and Volberg \cite{MakarovVolberg1986}, 
and Batakis \cite{Batakis1996} that 
$\omega^x$ is supported by a Borel set of dimension smaller than $\delta$; 
see also \cite{BatakisZdunik2015} for a generalization, and 
\cite{Volberg1992, Volberg1993, Zdunik1997} for similar questions related to complex dynamics.
However, restricting to a set on which the harmonic measure concentrates, Batakis constructed a Cantor set 
of dimension $\delta$ for which $\omega^x$ is also $\delta$-dimensional. 
Furthermore, in an unpublished letter Bishop constructed a non self-similar Cantor set of small dimension 
on which $\sigma$ and $\omega$ are boundedly equivalent. 
But these particular sets are very much not Ahlfors regular, and it 
was recently conjectured by Volberg \cite[Conjecture~1.9]{Volberg2022onephase} 
that this is impossible for Ahlfors regular sets of dimension strictly smaller than $1$. 
Even more recently, X.~Tolsa \cite{Tolsa2023ADreg} proved this conjecture in the special case of a set $K$ contained in the line and with Hausdorff dimension at least $1/2-\varepsilon$. 
Tolsa's result and the case of self similar sets show that Volberg's conjecture is natural. 
However, we prove that it does not hold:

\ms

\begin{thm}\label{thm:main}
  There exists $\delta_M\in (0,1\,]$ such that for $\delta\in (0,\delta_M)$, there exists a compact Ahlfors regular set $K$ of dimension $\delta$, such that for $x\in \Omega := \R^2 \sm K$, 
   the harmonic measure $\omega^x$ on $\Omega$, with pole at $x$, is 
   equivalent to $\sigma = \mathscr{H}^\delta \mres K$, i.e.
  \begin{equation} \label{eq:measures-eq}
C_x^{-1} \sigma(A) \leq \omega^x(A) \leq C_x \sigma(A)
\ \text{ for } A\subset K,
  \end{equation}
    for some $C_x \geq 1$.
  In addition, the Green function $G^x$ of $\Omega$ with pole at $x$ satisfies:
  \begin{equation}\label{eq:green-dist}
 C_x^{-1} \dist(y,K)^\delta \leq  G^x(y) \leq C_x \dist(y,K)^\delta,
  \end{equation}
 for $y$ close enough to $K$. 
 Furthermore, if $\delta$ is smaller than a certain $\delta_1 \in (0,1/2)$  
 the set $K$ can be contained in a line.
\end{thm}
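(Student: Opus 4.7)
The plan is to construct $K$ as the limit of a decreasing sequence of "cell" families, choosing the positions of each generation adaptively so as to force the harmonic measure of every cell to match its Hausdorff measure. Fix a large integer $N$ and a contraction ratio $\rho$ with $N\rho^\delta = 1$, and arrange that each generation-$n$ cell $Q$ of diameter $\ell$ contains exactly $N$ disjoint generation-$(n+1)$ cells of diameter $\rho \ell$; the limit $K$ is then $\delta$-Ahlfors regular by this choice of parameters. For the planar statement the children can be placed at the vertices of a highly contracted regular $N$-gon inside the parent, while for the "on a line" statement (valid in a smaller range of $\delta$) the children must be placed as $N$ well-separated subsegments of a segment, which forces a more restrictive geometry and hence a smaller threshold. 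The results of Carleson, Makarov--Volberg, and Batakis rule out a rigidly self-similar construction, so I would allow a small adaptive perturbation of the child positions at each step, depending on the ancestors already fixed, to compensate for the asymmetric "shadow" cast by the rest of $K$.

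The core inductive step I would prove is that, at each generation, the positions of the $N$ children of a cell $Q$ can be chosen so that $\omega^x(Q_i)$ equals $N^{-1} \omega^x(Q)$, with an error that is summable over generations. This should follow from a fixed point / continuity argument on the finite-dimensional parameter describing the children's positions. The key reason this is tractable for small $\delta$ is that $\rho$ is very small, so children of $Q$ are tiny compared to their mutual separation: from outside $Q$ they look essentially like point masses, and their harmonic weight is, up to a small error, proportional to their logarithmic capacity, which is of order $\rho \ell$ by the construction. Hence a near-symmetric placement of the children combined with a small explicit correction is enough to equalize the masses one scale at a time. Iterating this equidistribution along the ancestry of $Q$ yields the cellwise version of \eqref{eq:measures-eq}, and a standard Vitali-type upgrade transfers it to arbitrary Borel subsets of $K$.

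The Green function estimate \eqref{eq:green-dist} should then follow from \eqref{eq:measures-eq}, the Ahlfors regularity of $\sigma$, and the logarithmic representation of $G^x$: integrating the logarithmic kernel against $\omega^x$ and splitting according to the cell structure around $y$ produces the clean power $\dist(y,K)^\delta$, once the capacity of $K$ at every scale is known to be $\simeq \ell^\delta$. I expect the main obstacle of the proof to be the inductive step controlling the harmonic mass of the children. One must simultaneously handle the interaction between siblings (a one-scale problem, controlled by the sparsity $\rho N$ which tends to zero as $\delta \to 0$) and the influence of the rest of $K$, including cousin cells at other generations and the corrections introduced at earlier stages. Showing via a monotonicity / Harnack-type argument that these two interactions can be disentangled, so that the sibling symmetry is preserved up to a small error summable over generations, is likely the heart of the construction and the reason the result holds only when $\delta$ is below some threshold $\delta_M$.
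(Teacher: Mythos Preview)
Your high-level strategy---build $K$ adaptively, perturbing child positions at each generation to compensate for the bias inherited from the rest of the set---matches the paper's. The gap is in the inductive mechanism. You propose to choose child positions so that $\omega^x(Q_i)=N^{-1}\omega^x(Q)$ up to summable error, via a fixed-point argument; but $\omega^x$ is the harmonic measure of $\R^2\sm K$, and $K$ depends on all future choices, so this quantity is not available at stage $n$. Replacing $K$ by an $n$-th approximation does not obviously help: refining the approximation perturbs every cell mass at once, and you have not isolated a quantity that is both computable at stage $n$ and stable under further refinement. Your sketch acknowledges the difficulty (``influence of the rest of $K$, including cousin cells at other generations'') but does not supply the object that decouples it.

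The paper's resolution is to abandon harmonic measure as the inductive quantity and work instead with the logarithmic potential of the \emph{uniform} measure $\mu_n=2^{-n}\sum_{x\in K_n}\delta_x$ on the finite approximation $K_n$ (here $N=2$). One forces the discrete potential $g_n(x)=2^{-n}\sum_{y\ne x}\ln|y-x|$ to have oscillation $\le 2^{-n}\tfrac{\ln a}{2}$ on $K_n$; in the limit the potential of $\mu=\lim\mu_n$ is constant on $K$, which says precisely that $\mu$ is the equilibrium measure, hence the harmonic measure with pole at infinity. This sum depends only on $K_n$, so there is no circularity, and the correction at stage $n$ reduces to a single scalar choice per parent (the sibling separation $a_{n-1}(\varepsilon)r^{n-1}$), solved explicitly rather than by a fixed point. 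The residual error splits into two explicit pieces---moving the evaluation point from parent to child, and replacing each distant parent by its two children---both bounded by geometric series in $r$; this is where the smallness of $\delta$ enters. The Green function estimate then comes \emph{first}, directly from the potential representation, and the harmonic-measure comparison follows via the standard Green/harmonic-measure comparison at corkscrew points, rather than the reverse order you propose.
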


\ms

Recall that a compact set $K \subset \R^n$ is called Ahlfors regular
of dimension $\delta >0$ if it contains at least $2$ points and
there is a positive measure $\mu$ on $K$ and a constant $C_0 \geq 1$ such that 
\begin{equation}\label{1a1}
C_0^{-1} \rho^\delta \leq \mu(B(x,\rho)) \leq C_0 \rho^\delta
\ \text{ for $x\in K$ and } 0 < \rho \leq \diam(K).
\end{equation}
It is well known that then $\mu$ is equivalent to the restriction to $K$
of the Hausdorff measure $\H^\delta$, in the sense that
\begin{equation}\label{1a2}
C_1^{-1} \H^\delta \mres  K \leq \mu \leq C_1 \H^\delta \mres K
\end{equation}
for some constant $C_1$ that depends on $n$, $C_0$ and $\delta$. 
\ms

To be precise, Volberg conjectured that sets for which \eqref{eq:green-dist} holds are necessarily of dimension $\delta=1$, provided their complement satisfies an accessibility condition ($\Omega=\R^2\backslash K$ should be a one-sided NTA domain, see for instance \cite{DFM-Memoirs}). 
Both these conditions hold for our sets. 
 Volberg also conjectured that it is impossible for the ratio of two Green's functions to be analytic on such sets. We do not know if, nor believe that our set can provide a counterexample to that conjecture.

\ms

The two estimates \eqref{eq:measures-eq} and \eqref{eq:green-dist}
are related, because when $K \subset \R^2$ is Ahlfors regular of dimension 
$\delta < 1$, its complement is automatically connected, with non-tangential access.
See for instance Section~2 of \cite{DFM-Memoirs} 
for definitions and a proof (even for Ahlfors regular sets of dimension $< n-1$ in $\R^n$).
Because of this, we can estimate the size of the Green function at 
corkscrew points in terms of the harmonic measure of a corresponding surface ball. 
 This is quite standard for Cantor sets (see Lemma \ref{lem:MV}), but we also refer to the more general result 
 \cite{DFM-Mixed}, Lemma 15.28, 
which applies to our case and variants of the construction. 
This observation was used in \cite{DM-Compactness}, 
with the hope that characterizations of uniform rectifiability for sets $K$ may
be easier in terms of approximation of the Green function by distance functions.

\ms

It should be observed that our examples are close to being self-similar, as
they are bi-Lipschitz equivalent to regular fractal sets, and that for self-similar sets  
$\omega$ is singular to $\sigma$. 
This contrasts with the situation of co-dimension $1$, 
in which  all the absolute continuity criteria that we know are bi-Lipschitz invariant.

\medskip 

In the statement above, we can make sure that the Ahlfors constant $C_0$ stays bounded 
if we restrict to $\delta \in [\delta_{min},\delta_M]$ for some $\delta_{min} > 0$. 
We can also normalize $K$ so that $\diam(K) = 1$, and then we can make sure that the constants $C_x$ do not depend on $\delta$ and $x$, provided $\dist(x,K) \geq 1$, and $\dist(y, K) \leq 1$, say.

 \ms

We will prove the second part of the theorem with the dimension bound $\delta_1 = 0.249$, which is the best we could do on the line. 
This can probably be improved, but by X.~Tolsa's result there necessarily holds $\delta_1 < 1/2$, if $K$ is contained in a line.
We will not give a detailed proof for sets not contained in the line, but only show how to adapt the strategy. Overall, the best lower bound we were able to obtain for $\delta_M$ is a little more than $0.4$. 

\ms

We stated our result in terms of harmonic measure and Green function with a pole
$x$ at finite distance (and then it is known that the precise choice of $x$ hardly matters),
but the analogous result with a pole at infinity follows, and in fact this is what we will prove
first, because the Green function with pole at infinity is easier to construct.

\ms

For Theorem \ref{thm:main}, we insist on using the usual Laplace operator. It is easier to find other elliptic operators $L$ adapted to $K$ whose elliptic measure
satisfies \eqref{eq:measures-eq} and \eqref{eq:green-dist}. 
Indeed, it was proved in \cite{DavidMayboroda-good}, that even 
for the standard self-similar four-corner Cantor set of dimension~$1$, 
we can find an adapted elliptic operator 
$L= -\rm{div} \, a \, \nabla$,  with scalar coefficients $a$, 
for which the analogue of \eqref{eq:measures-eq} and \eqref{eq:green-dist} holds. 
The present work is also related to the study of elliptic operators adapted to boundaries of 
higher co-dimensions which has been developed in recent years (say, after  \cite{DFM-Memoirs}).

\ms 

We will explain in Section \ref{Sn} how to prove the following analogue of
Theorem \ref{thm:main} in $\R^n$, $n > 2$.

\begin{thm}\label{thm:main2}
 For each $n \geq 3$, there exists $\delta_n\in (0,1\,]$ such that for $\delta\in (0,\delta_n)$, 
 we can find a compact Ahlfors regular set $K$ of dimension $n-2+\delta$, 
 such that for $x\in \Omega := \R^2 \sm K$, 
   the harmonic measure $\omega^x$ on $\Omega$, with pole at $x$, is 
   equivalent to $\sigma = \mathscr{H}^{n-2+\delta} \mres K$, i.e.
  \begin{equation} \label{eq:measures-eq2}
C_x^{-1} \sigma(A) \leq \omega^x(A) \leq C_x \sigma(A)
\ \text{ for } A\subset K,
  \end{equation}
    for some $C_x \geq 1$.
  In addition, the Green function $G^x$ of $\Omega$ with pole at $x$ satisfies:
  \begin{equation}\label{eq:green-dist2}
 C_x^{-1} \dist(y,K)^\delta \leq  G^x(y) \leq C_x \dist(y,K)^\delta,
  \end{equation}
 for $y$ close enough to $E$. 
\end{thm}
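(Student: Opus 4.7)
The natural strategy is to reduce to the two-dimensional construction of Theorem~\ref{thm:main} via a product extrusion. Let $K_0 \subset \R^2$ be a planar Cantor set of dimension $\delta$ produced by Theorem~\ref{thm:main}, and, writing $\R^n = \R^2 \times \R^{n-2}$, set $K := K_0 \times [0,1]^{n-2}$. Since a product of Ahlfors regular sets is Ahlfors regular with dimension equal to the sum of the dimensions, $K$ is Ahlfors regular of dimension $n - 2 + \delta$, and the measure $\sigma = \H^{n-2+\delta}\mres K$ is equivalent to the product measure $(\H^\delta \mres K_0) \otimes (\H^{n-2}\mres [0,1]^{n-2})$.

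Next, the plan is to lift the two-dimensional Green function estimate to $\R^n$. If $u \in C^2(\R^n)$ depends only on the first two coordinates, then $\Delta_n u = \Delta_2 u$, so the Green function $G_0^\infty$ of $\R^2 \sm K_0$ with pole at infinity, extended trivially in the remaining variables, defines a non-negative harmonic function on $\R^n \sm (K_0 \times \R^{n-2})$ which vanishes on $K_0 \times \R^{n-2}$ and satisfies $G_0^\infty(y) \simeq \dist((y_1,y_2), K_0)^\delta$. I will use (smoothly cut-off versions of) this function as upper and lower barriers for the $n$-dimensional Green function $G^p$ at points $y$ whose transverse coordinates $(y_3,\dots,y_n)$ lie well inside $[0,1]^{n-2}$, applying the maximum principle in an annular region where $\dist((y_1,y_2), K_0)$ is small but bounded away from $0$. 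In this interior regime, $\dist(y,K) = \dist((y_1,y_2), K_0)$, and \eqref{eq:green-dist2} follows directly.

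Once the Green function estimate is established, the harmonic measure estimate \eqref{eq:measures-eq2} is a consequence of the standard corkscrew comparison: for $x \in K$ and $r$ small, $\omega^p(B(x,r) \cap K) \simeq r^{n-2} G^p(y_r)$ where $y_r$ is a corkscrew point at distance $\simeq r$ from $K$ (this is the higher-dimensional analogue of Lemma~\ref{lem:MV}, contained in Lemma~15.28 of \cite{DFM-Mixed}, whose hypotheses are satisfied since complements of low-dimensional Ahlfors regular sets in $\R^n$ are one-sided NTA). Combining with $G^p(y_r) \simeq r^\delta$ yields $\omega^p(B(x,r)\cap K) \simeq r^{n-2+\delta} \simeq \sigma(B(x,r)\cap K)$, so by standard differentiation of measures we obtain \eqref{eq:measures-eq2}.

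The main obstacle is the edge effect near $K_0 \times \partial([0,1]^{n-2})$, where the product translation invariance in the transverse variables is lost and the two-dimensional barrier $G_0^\infty$ is no longer a sharp upper bound: on the ``outer'' side of the slab, $\dist(y,K)$ can be much larger than $\dist((y_1,y_2), K_0)$. I expect to handle this either by replacing $[0,1]^{n-2}$ by a smoothed-out version (so that $\partial([0,1]^{n-2})$ is replaced by a smooth hypersurface with $n$-dimensional harmonic measure under control), or by noting that $K_0 \times \partial([0,1]^{n-2})$ is itself Ahlfors regular of dimension $n-3+\delta$ and thus contributes negligibly to harmonic measure by a standard Wiener-type/capacity argument. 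A more symmetric alternative would be to work with a periodic extrusion in the transverse variables and restrict to one period, but either way, some care is needed because the comparison between $G^p$ and $\dist(\cdot,K)^\delta$ must be shown to hold up to the boundary of $K$, not merely on an interior piece.
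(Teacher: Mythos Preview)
Your approach differs substantially from the paper's, and the edge effect you flag is not a technicality but a genuine obstruction.

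The paper does \emph{not} lift the two-dimensional Green function. It takes $K = K_0 \times \mathbb{S}^{n-2}$, where $K_0$ lies on a \emph{line} $\R \times \{0\}^{n-1}$ and $\mathbb{S}^{n-2}$ is the unit sphere in the orthogonal $\R^{n-1}$, and then \emph{reruns the balancing construction of Lemma~\ref{mainLemma} from scratch} with the Newtonian kernel $|x-y|^{2-n}$ in place of $\ln|x-y|$. The sphere is chosen precisely for its rotational symmetry: the potential at $(t, x')$ created by the uniform measure on $\{s\} \times \mathbb{S}^{n-2}$ depends only on $t-s$ and $|x'|$, so on $K$ (where $|x'|=1$) it reduces to a function $e(t-s,1)$ of one real variable, which is strictly decreasing with a uniform drop on dyadic scales. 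This is exactly what is needed to tune the spacings $a_k(\varepsilon)$ and make the $\R^n$-potential constant on $K$. Since $\mathbb{S}^{n-2}$ has no boundary, there is no edge.

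With your box extrusion $K_0 \times [0,1]^{n-2}$ the conclusion of the theorem should in fact \emph{fail}. The equilibrium measure for the Newtonian kernel concentrates near edges (already for a disk in $\R^3$ the equilibrium density blows up like $(1-|x|^2)^{-1/2}$ at the rim), and there is no reason the $\R^n$-equilibrium measure of $K_0\times[0,1]^{n-2}$ should be the product of the $\R^2$-equilibrium measure of $K_0$ with Lebesgue on the cube: these are equilibrium problems for \emph{different} kernels. So one expects $\omega^p$ to put disproportionately more mass on balls centered near $K_0 \times \partial([0,1]^{n-2})$ than $\sigma$ does, and \eqref{eq:measures-eq2} to break there. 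Your fixes do not avoid this: smoothing the cube still leaves a boundary; the zero-capacity remark only says the edge itself is $\omega^p$-null, not that balls centered on it have the right mass; and periodizing lands you on $\R^2\times\mathbb{T}^{n-2}$, not $\R^n$. There is also a global problem with the barrier even away from the edge: the lifted $G_0^\infty$ grows like $\ln|y'|$ at infinity while $G^p$ decays like $|y|^{2-n}$, so no constant multiple of one bounds the other outside a compact set, and your maximum-principle comparison has no outer boundary on which to close. A local boundary Harnack comparison between $G^p$ and the lifted $G_0^\infty$ does work deep inside the slab, but the implied constant depends on the basepoint and you have no mechanism to keep it uniform as the transverse coordinate approaches $\partial([0,1]^{n-2})$.
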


In Theorem \ref{thm:main}, we could consider sets $K$ of any (small enough)
dimension $\delta \in (0,1)$, because any Ahlfors regular set of dimension $\delta > 0$ has a 
positive logarithmic capacity, and hence the harmonic measure is well defined. 
In $\R^n$, $n \geq 3$, this is no longer true,
which is why we restrict to sets of dimensions larger than $n-2$.

Also, the sets $K$ provided by Theorem \ref{thm:main2} 
are of the form $K_0 \times S$, where $K_0$ is an Ahlfors regular set of dimension $\delta$ in a line $L$,
and $S$ is the unit sphere of dimension $n-2$ in the orthogonal hyperplane $L^\perp$. Thus $K$ is
no longer a Cantor set, but it remains that it is not rectifiable (its dimension is not an integer).

If we really wanted to use the same sort of construction to find a Cantor set 
$K \subset \R^n$ with the desired properties, it seems that we would need to find
configurations $A$ of $N$ points in (or near) the sphere, with $N$ large enough
(so that $K$ has a dimension larger than $m-2$) and symmetry properties such that
the potential created at $a\in A$ by the other points of $A$ is the same for all $a \in A$.
This looks like the Thomson problem of optimizing the electric energy of $N$ charges on the sphere, 
maybe with a bit more flexibility, but at this point we do not know how to proceed.

\ms

\subsection*{Construction strategy}

We now say a few words about the algorithm for proving Theorem~\ref{thm:main}. 
A usual way to obtain an Ahlfors regular set $K^0$ of dimension
$\delta \in (0,1)$ in the line is to use a self-similar construction. We start from the 
unit interval $I_0 = [0,1]$, then replace $I_0$ with two intervals $I$ of length $r < 1/2$
contained in $I_0$, at each end of $I_0$, then replace each of these two intervals by
two intervals of length $r^2$, and so on. At the $n$-th generation, 
we get $2^n$ intervals of length $r^n$, and the intersection of all the generations yields a self-similar Cantor set of dimension $\delta$, where $\delta$ is such that $\delta = -\ln(2)/\ln(r)$.

\ms 

If we allow sets $K^0 \subset \R^2$ that are not contained in the line, 
we can start with a square and keep a smaller square in each corner, or even start with a disk and keep 
$N$ equally spaced smaller disks tangent to the boundary. This leaves more flexibility, and we shall discuss in 
Section \ref{sec:Npieces} how to start from such a construction to get Cantor sets $K \subset \R^2$ with slightly 
larger dimensions than the ones that we construct in the line.

\ms

The set $K^0 \subset \R$ described above is self-similar and the results of \cite{BatakisZdunik2015} 
imply that the harmonic measure $\omega^x$ on $\R^2 \sm K^0$ 
lives on a subset of $K^0$ of dimension $< \delta$. One way to interpret this is to say that
at finite scales, there will always be intervals of the decomposition that receive 
a tiny bit more harmonic measure $\omega$ than their fair share.
These would typically be intervals that sit at the 'exterior' of $K^0$.
By self-similarity, this unbalance will grow geometrically and we expect $\omega$
to be concentrated on the parts of $K^0$ which are close to the 'exterior' at many steps in the construction of $K$.

\ms

However, there is some room in this construction: one can translate the intervals a little bit along the real 
line without losing the Ahlfors regularity. The key remark is that if two neighboring intervals are far from the rest of the set, 
moving them apart increases the probability that they will be hit by Brownian motion. (From the analytic point of view, 
the logarithmic capacity of a union of two sets increases with the distance between these two sets.) 
Thus, when splitting an interval that was getting less than its share of harmonic measure we can increase 
the harmonic measure of its descendants by spacing them a bit more than what is done 
in the rest of the construction, and this will turn 
out to be enough to compensate the unbalance of the previous step. 

\ms

 The problem is that computing exactly the harmonic measure is difficult and any error should be immediately compensated, 
 if one does not want it to grow out of control as in the self-similar case.
 Fortunately,  our construction allows us to compute the Green function. 
 To do this, we use classical potential theory, which tells us that a probability measure (a charge distribution) on $K$ 
 whose logarithmic (electric) potential is constant on $K$ is equal to the harmonic measure with pole at infinity. 
 This measure is called the \emph{equilibrium distribution} of $K$. 
 Moreover, the corresponding potential\footnote{The potential is the convolution $e \ast \mu$ with the fundamental solution of the Laplacian in dimension $2$,  $e(x) = \ln(|x|)$.} 
 is the Green function at infinity, up to an additive constant. 
 We just need to find this measure $\mu$ and show that it is equivalent to the Hausdorff measure. 
 We will construct $K$ so that the equilibrium distribution $\mu$ is precisely the measure given by the pushforward onto $K$ of the natural probability measure on the abstract Cantor set $\{0,1\}^{\bN}$. Together with the fact that $\mu$ satisfies \eqref{1a1} and thus \eqref{1a2}, this will allow us to conclude.
 
 \ms

 Now let us define approximating Green's functions. Instead of considering finite unions of intervals, 
 we approximate our Cantor set by finite families $K_n$ of  $2^n$ points (the centers of the intervals above)
 and estimate the electric potential created by a probability measure $\mu_n$ evenly distributed on these points. 
 For each point $x\in K_n$, we will denote by $g_n(x)$ the potential created by all the other points in $K_n$. 
 If $g_n$ is close to constant on $K_n$ we will construct $K_{n+1}$ so that $g_{n+1}$ oscillates even less: this is possible because given $x\in K_{n+1}$ the main contribution to $g_{n+1}(x)$ is that of the sibling $\wt x$ of $x$ (the only other point in $K_{n+1}$ with the same parents), which can be tuned by spacing $x$ and $\wt x$ more or less apart. This will suffice 
to compensate exactly the bias inherited from the variation of $g_n$. Of course another bias is created at step $n+1$, but it is not very large because the potential created by $x$ and $\wt x$ is not very different from that created by their parent, and depends little on the spacing. It can thus be controlled at the next step and altogether the oscillation of $g_n$ will go to $0$ 
as $n$ tends to infinity. 
The measures $\mu_n$ converge to a natural probability measure $\mu$ and the corresponding potentials will 
converge to the potential created by $\mu$, which is constant on $K$. 
Thus $\mu$ is the harmonic measure and its potential is (a constant plus) the Green 
function for $\R^2\sm K$.

 We shall discuss later in Subsection \ref{sec:Npieces} 
 some slightly different ways to construct Cantor sets $K \subset \R^2$ that are not contained in a line,
 and that yield Ahlfors regular sets of slightly larger dimensions that satisfy the properties of Theorem \ref{thm:main}.
 We shall also see in Section \ref{Sn} how to construct examples $K \subset \R^n$, $n \geq 3$. In the 
 mean time we explain with more detail the geometric construction of $K_n$ and $K \subset \R$
 (in Section \ref{sec:construct}), then use estimates on the Green function to prove 
 Theorem \ref{thm:main} (in Section \ref{sec:Green}).
\ms

\section{Cantor set construction}\label{sec:construct}

\subsection{Construction on the line}\label{sec:construct-line}

 Our Cantor set $K \subset \R$ will be an embedding of the ``universal Cantor set'' 
 $\cE :=\{-1,1\}^{\bN \backslash \{0\}}$. 
 Let $r \in (0, 1/2)$ and $a \in [1,3]$ be given, to be chosen later. 
 We define a map $f : \cE \to \R$ by
 \begin{equation}\label{2a1}
f(\varepsilon):= \sum_{k=1}^\infty \dfrac{a_{k-1}(\varepsilon)}{2} \, r^{k-1} \varepsilon_k,
\end{equation}
where for each infinite word $\varepsilon= (\varepsilon_1,\varepsilon_2,\dots) \in \cE$ and each $k \geq 0$,
we chose a coefficient $a_k(\varepsilon)=a_k(\varepsilon_1,\dots,\varepsilon_{k}) \in [\,1, a\,]$.
For the first term we take $a_0(\varepsilon) = 1$.
It will be important that $a_k(\varepsilon)$ (to be chosen later) can depend on the $k$ first co-ordinates of
$\varepsilon$, but not on the others. Then set
\begin{equation}\label{eq:defK}
    K:= \Big \{  f(\varepsilon)\, , \, \varepsilon\in \cE \Big \}.
\end{equation}
 We will also use the approximations $\cE_n:= \{-1,1\}^{\{1,\dots,n\}}$
 of $\cE$ (which can be seen as sets of words of length $n$) and the approximating sets $K_0 = \{0\}$ and,
 for $n \geq 1$, 
 \[
   K_n:= \Big \{ f_n(\varepsilon):= 
   \sum_{k=1}^{n} \dfrac{a_{k-1}(\varepsilon)}{2} \, r^{k-1} \varepsilon_k, \, \varepsilon\in \cE_n \Big \}.
 \]
 We will restrict our attention to 
\begin{equation}\label{2a3}
a \in [1,3] \ \text{ and } r \in (0, 1/16];
\end{equation}
for most of the construction we do not need to be that restrictive, but notice that
$r=1/16$ corresponds to a dimension $\delta = 1/4$ (because $r^\delta = 1/2$), which
we found out that we cannot reach with this construction, so we will not loose anything here.
Then
 \begin{align}\label{eq:condition-a} 
1- \dfrac{ar}{1-r} \geq \frac45. 
 \end{align}
We will check soon (in Propositions \ref{pro:AR} and \ref{pro:bilip})
that changing the parameters $a_n(\varepsilon)$ in \eqref{2a1} 
yields bi-Lipschitz equivalent sets $K$, and in particular that 
 $K$ is bi-Lipschitz equivalent to the self-similar Cantor set $K^0$ obtained by 
taking all the $a_n(\varepsilon)$ equal to $1$. Then the sets $K$ that we can get with \eqref{2a3} 
are all Ahlfors-regular of dimension $\delta = -\ln(2)/\ln(r) \leq 1/4$.

\medskip 

The verification of Ahlfors regularity for the fractal set $K^0$ is classical and easy; one takes the invariant
measure $\mu$ that gives the same mass $2^{-n}$ to each of the $2^n$ intervals that compose the 
$n$-th approximation of $K^0$,  and checks that $\mu$ satisfies \eqref{1a1}. 
But given our representation \eqref{eq:defK} of $K$, it is slightly easier to check first that 
that $\cE$, endowed with the following ultrametric distance, is an Ahlfors regular metric space, and then prove 
that our parameterization $f$ is bi-Lipschitz. 
For $\varepsilon, \varepsilon' \in \cE$, 
let $m(\varepsilon,\varepsilon')$ be the smallest integer such that $\varepsilon_m\neq \varepsilon'_m$ 
(if $\varepsilon = \varepsilon'$, set $m(\varepsilon,\varepsilon')=\infty$) and define 
$\dist_r(\varepsilon,\varepsilon'):= r^{m(\varepsilon,\varepsilon')-1}$. 
This also defines quotient distances (that we denote the same way) on the finite sets $\cE_n$. 
That is, for $n \geq 1$ and $\ol\varepsilon,\ol\varepsilon'\in \cE_n$, $\ol\varepsilon \neq\ol\varepsilon'$,
we let $m(\ol\varepsilon,\ol\varepsilon')  \in \{1,\dots,n\}$ be the common value of $m(\varepsilon,\varepsilon')$,
where $\varepsilon$ starts with $\ol\varepsilon$ and $\varepsilon'$ starts with $\ol\varepsilon'$, and
$\dist_r(\varepsilon,\varepsilon'):= r^{m(\varepsilon,\varepsilon')-1}$ defines a distance on $\cE_n$.
The following result is classical. 

\begin{pro}\label{pro:AR}
The space $\cE$ endowed with the distance $\dist_r$ is  Ahlfors regular of dimension $\delta :=-\ln 2/\ln r$.
\end{pro}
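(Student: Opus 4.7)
The plan is to exhibit the natural Bernoulli product measure on $\cE$ and verify \eqref{1a1} directly, exploiting the fact that the ultrametric structure of $\dist_r$ makes balls coincide with cylinders.

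First, I would equip $\cE$ with the probability measure $\mu$ obtained as the countable product of the uniform measure on $\{-1,1\}$, so that for every $n \geq 1$ and every word $\ol\varepsilon \in \cE_n$, the cylinder $C(\ol\varepsilon) := \{\varepsilon \in \cE : \varepsilon_k = \ol\varepsilon_k \text{ for } 1 \leq k \leq n\}$ has mass $\mu(C(\ol\varepsilon)) = 2^{-n}$. This measure is well defined by Kolmogorov extension (or, more elementarily, as the pushforward of Lebesgue measure on $[0,1]$ via binary expansion).

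Next I would identify balls with cylinders. Because $\dist_r$ takes only the values $\{0\} \cup \{r^{k} : k \geq 0\}$, for any $\varepsilon \in \cE$ and any $\rho \in (0, 1]$ there is a unique integer $n \geq 1$ such that $r^n \leq \rho < r^{n-1}$, and the ball $B(\varepsilon, \rho)$ equals $B(\varepsilon, r^{n}) = \{\varepsilon' : m(\varepsilon,\varepsilon') \geq n+1\}$, which is precisely the cylinder $C(\varepsilon_1,\dots,\varepsilon_n)$. Hence $\mu(B(\varepsilon, \rho)) = 2^{-n}$.

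Finally, to convert this into \eqref{1a1} I would use the defining relation $r^\delta = 1/2$, so that $2^{-n} = r^{n\delta}$. The sandwich $r^n \leq \rho < r^{n-1}$ then gives
\[
\tfrac{1}{2}\rho^\delta < r^{(n-1)\delta}/2 = 2^{-n} \leq \rho^\delta,
\]
so $\mu$ satisfies \eqref{1a1} with $\delta = -\ln 2/\ln r$ and constant $C_0 = 2$. Since the diameter of $\cE$ equals $1$, this proves Ahlfors regularity. I do not anticipate a genuine obstacle here: the whole argument hinges on the ultrametric property of $\dist_r$, which replaces the usual covering/packing estimates with an exact counting of cylinders, and the small constant $C_0 = 2$ comes for free. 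The only thing worth stating carefully is the convention that balls at radius $\rho \in [r^n, r^{n-1})$ are closed and coincide with the level-$n$ cylinder, which uses the ultrametric inequality $\dist_r(\varepsilon,\varepsilon'') \leq \max(\dist_r(\varepsilon,\varepsilon'), \dist_r(\varepsilon',\varepsilon''))$.
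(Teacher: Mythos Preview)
Your proof is correct and follows essentially the same approach as the paper's own proof: both introduce the natural product probability measure on $\cE$, identify balls with cylinders $Q_n(\ol\varepsilon)$ of mass $2^{-n}$, and use $r^\delta = 1/2$ to conclude. You simply spell out the details more explicitly (and extract the constant $C_0 = 2$), whereas the paper is content with $\sigma(Q_n) \simeq \rho^\delta$.
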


\begin{proof}
In the space $\cE$, the balls are just $\cE$ and the obvious cells $Q_n(\ol \varepsilon)$, i.e.~the set of all children of $\ol\varepsilon$ with $n \geq 1$ and 
$\ol\varepsilon \in \cE_n$. Let $\sigma$ be the natural probability measure on $\cE$
(the product of the obvious probability measures on $\{ -1, 1 \}$); thus $\sigma(Q_n) = 2^{-n}$,
whereas $Q_n$ is a ball of radius $\rho$ for $\rho \simeq r^n$, so $\sigma(Q_n(\ol \varepsilon)) \simeq \rho^\delta$.
This proves that $\sigma$ is Ahlfors regular of dimension $\delta$, as needed. 
\end{proof}

\begin{pro}\label{pro:bilip}
 Under condition \eqref{eq:condition-a}, for $\varepsilon, \varepsilon' \in \cE$ 
 there holds:
 \begin{equation}\label{eq:bilip-N}
   \Big  (1- \dfrac{ar}{1-r}\Big )r^{m(\varepsilon,\varepsilon')-1} \le |f(\varepsilon)-f(\varepsilon')| \le \dfrac{a r^{m(\varepsilon,\varepsilon')-1}}{1-r},
 \end{equation}
 and the same is true of $f_n$ for $n\ge 0$. In particular, these maps are bi-Lipschitz continuous from $\cE$ 
 (respectively $\cE_{n}$) endowed with the distance $\dist_r$ to their images in $\R\times \{0\} \subset \R^2$.
\end{pro}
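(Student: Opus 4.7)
The plan is to directly split the series defining $f(\varepsilon)-f(\varepsilon')$ into the ``diverging term'' at index $k=m:=m(\varepsilon,\varepsilon')$ and a tail, and estimate each. The crucial structural observation is that $a_{k-1}$ depends only on the first $k-1$ coordinates of its argument. Since $\varepsilon$ and $\varepsilon'$ agree on their first $m-1$ coordinates, we get $a_{k-1}(\varepsilon)=a_{k-1}(\varepsilon')$ for every $k\le m$, and in particular the coefficient $a_{m-1}$ is common to both words.

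With that in hand, the terms for $k<m$ cancel outright (same $a_{k-1}$ and same $\varepsilon_k$), and the $k=m$ term becomes
\[
\frac{a_{m-1}(\varepsilon)}{2}\,r^{m-1}(\varepsilon_m-\varepsilon'_m)= \pm\, a_{m-1}(\varepsilon)\,r^{m-1},
\]
whose absolute value lies in $[r^{m-1},\,a\, r^{m-1}]$ since $a_{m-1}\in[1,a]$. For the remaining tail $k\ge m+1$, I bound each summand crudely by $\tfrac{1}{2}r^{k-1}\cdot 2a = a r^{k-1}$, so the tail contributes at most
\[
a\sum_{k=m+1}^{\infty} r^{k-1}=\frac{ar}{1-r}\,r^{m-1}.
\]
Combining by the reverse and direct triangle inequalities yields
\[
\Bigl(1-\tfrac{ar}{1-r}\Bigr)r^{m-1}\le |f(\varepsilon)-f(\varepsilon')|\le\Bigl(a+\tfrac{ar}{1-r}\Bigr)r^{m-1}=\tfrac{a}{1-r}\,r^{m-1},
\]
which is exactly \eqref{eq:bilip-N}. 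The lower bound is nontrivial only because of the assumption \eqref{eq:condition-a}, which guarantees $1-\tfrac{ar}{1-r}\ge 4/5>0$ and so prevents the tail from swamping the main term; this is the one spot where the smallness constraints \eqref{2a3} on $a,r$ are genuinely used.

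For $f_n$, the argument is identical: if $\varepsilon,\varepsilon'\in\cE_n$ are distinct and $m=m(\ol\varepsilon,\ol\varepsilon')\le n$, the terms for $k<m$ cancel, the $k=m$ term is again $\pm a_{m-1}r^{m-1}$, and the finite tail from $k=m+1$ to $n$ is bounded by the same geometric-series estimate (in fact by a smaller quantity). So the same two inequalities hold. Finally, since $\mathrm{dist}_r(\varepsilon,\varepsilon')=r^{m-1}$ by definition, \eqref{eq:bilip-N} reads exactly as a bi-Lipschitz bound between $(\cE,\mathrm{dist}_r)$ (resp.\ $(\cE_n,\mathrm{dist}_r)$) and the image in $\R\times\{0\}$, with constants $1-\tfrac{ar}{1-r}$ and $\tfrac{a}{1-r}$.

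The only real pitfall is bookkeeping about which coefficients depend on which coordinates; once one is careful to note that $a_{m-1}$ is common to $\varepsilon$ and $\varepsilon'$, no clever estimate is needed and the proof reduces to a geometric series computation.
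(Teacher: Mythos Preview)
Your proof is correct and follows essentially the same approach as the paper: isolate the term at $k=m$ using that $a_{m-1}(\varepsilon)=a_{m-1}(\varepsilon')$, bound the tail by the geometric series $\sum_{k\ge m+1} a\,r^{k-1}=\tfrac{ar}{1-r}r^{m-1}$, and apply the triangle inequality in both directions. The only difference is cosmetic---you make the upper bound computation $a+\tfrac{ar}{1-r}=\tfrac{a}{1-r}$ explicit, whereas the paper states the conclusion directly.
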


\begin{proof}
    Given $\varepsilon \neq \varepsilon'$ in $\cE$, let $m=m(\varepsilon,\varepsilon')$. We can write
    \[
       f(\varepsilon)-f(\varepsilon') = \sum_{n=m}^\infty \Big (\dfrac{a_{n-1}(\varepsilon)}{2} \varepsilon_n  - \dfrac{a_{n-1}(\varepsilon')}{2} \varepsilon'_n \Big) r^{n-1},
    \]
  and as $a_{m-1} (\varepsilon)= a_{m-1}( \varepsilon')$ (since $a_n(\varepsilon)$ depends only on 
  $\varepsilon_1, \dotsc, \varepsilon_n$) and $1\le a_n \le a$ we have
    \[
        \big |f(\varepsilon)-f(\varepsilon') -  \dfrac{a_{m-1}(\varepsilon)r^{m-1}}{2} (\varepsilon_m -\varepsilon'_m)\big| \le \sum_{n=m+1}^\infty a r^{k-1}.
    \]
    Using condition \eqref{eq:condition-a}, we get \eqref{eq:bilip-N}.
\end{proof}

We come to the key technical point of our construction. 
Define $g_0 :=0$ and for $n \in \mathbb{N}\sm\{0\}$ and  $x\in K_n$, define 
\begin{equation}\label{eq:defg}
    g_n(x):= 2^{-n} \sum_{y\in K_n\backslash \{x\}} e(y-x),
\end{equation}
where as previously, $e(x) = \ln |x|$. 
\begin{lem}\label{mainLemma}
    Assume that $r$, and $a$ satisfy condition \eqref{eq:condition-a}, and $r$ is sufficiently small.
  Fix an integer $n\ge 1$ and suppose that the $a_k(\varepsilon)$ have been chosen for $k= 0,\dots, n-2$ 
  and that for all $x \in K_{n-1}$ there holds
  \begin{equation}\label{eq:iter-step-n-1}
    c_{n-1} \le g_{n-1}(x) \le c_{n-1} +  2^{-(n-1)} \dfrac{\ln a}{2},
  \end{equation}
  for some real constant $c_{n-1}$.  Then one can chose the $a_{n-1}(\varepsilon)\in [\,1,a\,]$, 
  depending only on $\varepsilon_1, \dotsc, \varepsilon_{n-1}$, such that there exists $c_n$, 
  with $|c_n-c_{n-1}| \lesssim n 2^{-n}\ln a$ and  satisfying
  \begin{equation}\label{eq:iter-step-n}
    c_n \le g_n (x) \le c_n +  2^{-n} \dfrac{\ln a}{2} 
   \  \ \text{  for $x\in K_n$.}
  \end{equation}
\end{lem}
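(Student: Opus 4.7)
The plan is to decompose $g_n(x)$ into a part coming from the sibling of $x$ (which can be tuned through $a_{n-1}$) and a part that is essentially $g_{n-1}(x')$, where $x'\in K_{n-1}$ is the parent of $x$. By construction, each $x\in K_n$ has a unique sibling $\wt x$ with $x=x'+s$ and $\wt x=x'-s$ where $s=\pm\tfrac12 a_{n-1}(x')r^{n-1}$; similarly every other $y\in K_n$ pairs with its sibling $\wt y$, symmetrically around their common parent $y'\in K_{n-1}$, so $\{y,\wt y\}=\{y'+\delta_{y'},y'-\delta_{y'}\}$ with $\delta_{y'}=\tfrac12 a_{n-1}(y')r^{n-1}$. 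I would first split
\[
g_n(x)=2^{-n}\ln|x-\wt x|+2^{-n}\!\!\sum_{y'\in K_{n-1}\setminus\{x'\}}\!\!\bigl[\ln|y'+\delta_{y'}-x|+\ln|y'-\delta_{y'}-x|\bigr].
\]

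Next, I would collapse each paired term via the identity $\ln|y'+\delta-x|+\ln|y'-\delta-x|=2\ln|y'-x|+\ln\bigl|1-\delta^2/(y'-x)^2\bigr|$, and then write $|y'-x|=|y'-x'-s|$ so that $2\ln|y'-x|=2\ln|y'-x'|+2\ln|1-s/(y'-x')|$. Summing over $y'$ and using $g_{n-1}(x')=2^{-(n-1)}\sum_{y'\ne x'}\ln|y'-x'|$ gives
\[
g_n(x)=g_{n-1}(x')+2^{-n}\ln\bigl(a_{n-1}(x')\,r^{n-1}\bigr)+E(x),
\]
where $E(x)$ is the sum of the two Taylor remainders. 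Using $|s|,\delta_{y'}\le \tfrac a2 r^{n-1}$, $|y'-x'|\ge \tfrac45 r^{m(x',y')-1}$ from Proposition~\ref{pro:bilip}, the fact that exactly $2^{n-1-m}$ points $y'\in K_{n-1}$ satisfy $m(x',y')=m$, and $|\ln(1+t)|\le 2|t|$ for $|t|\le 1/2$, a geometric sum yields $|E(x)|\le C a\,2^{-n}\sum_{k=1}^{n-1}(2r)^k\le \tfrac14\,2^{-n}\ln a$ once $r$ is small enough.

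Finally, I would pick $a_{n-1}$ to absorb the variation of $g_{n-1}$ exactly by setting
\[
\ln a_{n-1}(x'):=2^{n}\bigl(c_{n-1}+2^{-n}\ln a-g_{n-1}(x')\bigr),
\]
which lies in $[0,\ln a]$ by hypothesis \eqref{eq:iter-step-n-1} and depends only on $\varepsilon_1,\dots,\varepsilon_{n-1}$. With this choice, $g_n(x)=c_{n-1}+2^{-n}\ln a+2^{-n}(n-1)\ln r+E(x)$, so taking $c_n:=c_{n-1}+2^{-n}\ln a+2^{-n}(n-1)\ln r+\min_{K_n}E$ yields \eqref{eq:iter-step-n}, because $\mathrm{osc}\,E\le 2\sup|E|\le \tfrac12\,2^{-n}\ln a$. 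The bound $|c_n-c_{n-1}|\lesssim n\,2^{-n}\ln a$ then drops out from the explicit formula. The main difficulty lies in the error estimate: the exponential growth $2^{n-1-m}$ in the number of competing pairs at depth $m$ must be beaten by the smallness $O(r^{n-m})$ of each Taylor remainder, which forces $r$ to be small enough that $ar/(1-2r)\le \tfrac14\ln a$; it is precisely this tension that fixes the hypothesis ``$r$ sufficiently small'' in the lemma.
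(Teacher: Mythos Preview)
Your proof is correct and follows essentially the same route as the paper's: you decompose $g_n(x)$ into $g_{n-1}(\wh x)$ plus the tunable sibling term plus two error pieces, and your two remainders $\ln\bigl|1-\delta_{y'}^2/(y'-x)^2\bigr|$ and $2\ln\bigl|1-s/(y'-x')\bigr|$ are exactly the paper's $\Delta_3$ and $\Delta_2$ respectively, estimated by the same geometric sum over the annuli $\{y':m(x',y')=m\}$. The only cosmetic difference is that the paper keeps $\Delta_2$ and $\Delta_3$ separate and tracks slightly sharper constants (aiming for the explicit bound $|\Delta_2|+|\Delta_3|\le\tfrac14\ln a$), whereas you merge them into a single $E(x)$; your choice of $a_{n-1}$ and of $c_n$ is the same as theirs.
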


\begin{proof}
  Fix $x\in K_n$, and denote by $\wh x$ its parent in $K_{n-1}$.
  We can write
  \begin{equation*}
    g_n(x) = g_{n-1}(\wh x) + 2^{-n}\big (\Delta_1 + \Delta_2 + \Delta_3 \big),
  \end{equation*}
  where the terms $\Delta_j$ are defined as follows. 
  The first accounts for the influence of the sibling $\wt x$ of $x$ (the other child of $\wh x$):
  \begin{equation}\label{eq:Delta1-def}
     \Delta_1  =  e(\wt x-x).
  \end{equation}
  The term $\Delta_2$ is the difference of the potential created by the points $y$ in $K_{n-1}$ between 
  the points $\wh x$ and $x$, counted twice because of the $2^{-n+1}$ in the definition of $g_{n-1}$: 
  \begin{equation}\label{eq:Delta2-def}
      \Delta_2 = 2 \sum\limits_{\substack{ y \in K_{n-1}\\ y \neq \wh x}} \big (e( y-x) - e( y-\wh x)\big ).
  \end{equation}
  Finally, $\Delta_3$ collects the difference between the potential at $x$ created by the points $y$ at level $n-1$ and that created by the children of these points $y$:
  \begin{equation}\label{eq:Delta3-def}
    \Delta_3  =  \sum\limits_{\substack{ y \in K_{n-1}\\  y \neq \wh x}} \sum_{z\in \mathrm{Ch}(y)} \big ( e(z-x) - e(y-x)\big ).
  \end{equation}

  Let us first estimate $\Delta_1$; 
  it is the one we can tune without perturbing the rest too much. Write $x= f_n(\varepsilon)$, 
  for some $\varepsilon \in \cE_n$; then  $\wh x = f_{n-1}(\wh\varepsilon)$ (where we remove $\varepsilon_n$ 
  from $\varepsilon$ to get $\wh\varepsilon$) and 
  \begin{equation*}
    \Delta_1 =  \ln (a_{n-1}(\varepsilon)r^{n-1}/2) = \beta_n + \ln (a_{n-1}(\varepsilon)),
  \end{equation*}
  where $\beta_n$ does not depend on $x$. We can choose $a_{n-1}(\varepsilon) \in [\, 1,a\, ]$,
  depending on $\wh \varepsilon$, i.e., the first coordinates $\varepsilon_1, \ldots, \varepsilon_{n-1}$ of 
  $\varepsilon$, so that $\Delta_1$ takes any value between $\beta_n$ and $\beta_n + \ln(a)$.
  We do this so that
  \begin{equation*}
    g_{n-1}(\wh x) + 2^{-n}\Delta_1 = c_{n-1} + 2^{-(n-1)}\dfrac{ \ln a}{2} + 2^{-n}\beta_n.
  \end{equation*}
  So  this choice of $a_{n-1}(\varepsilon)$ compensates exactly the deviation of $g_{n-1}$ from the constant 
  $c_{n-1} +2^{-n} \ln a$ in \eqref{eq:iter-step-n-1}.  Note that $a_{n-1}(\varepsilon)$ depends only on the 
  first $n-1$ terms of $\varepsilon$, so the sibling of $x$ is handled with the same choice of $a_{n-1}(\varepsilon)$. 
  \medskip
  
  There remains to show that $|\Delta_2|+ |\Delta_3|\le \dfrac{\ln(a)}{4}$, 
  where we divide the admissible error by $2$  because we cannot control its sign. 
  To do this, we will use the fact that for $|u|<1$, there holds
  \begin{equation}\label{eq:prop-log}
    |\ln (1+u)| \le \dfrac{ |u|}{1-|u|}. 
  \end{equation}
  Recall that $\wh x = f_{n-1}(\wh\varepsilon)$, where $\wh\varepsilon \in \cE_{n-1}$ 
  records the first $n-1$ letters of the word defining $x$; we decompose $K_{n-1}\backslash \{\wh x\}$ into the annuli
 \[
    \cY_\ell := \big \{ y \in K_{n-1}, y= f_{n-1}(\varepsilon'),\  m(\varepsilon,\varepsilon') = n-\ell \big \},
  \]
  where $\ell$ runs over $\{1,\dots, n-1\}$. It is straightforward to check that $\# \cY_\ell = 2^{\ell-1}$. Fix $y\in \cY_\ell$; 
  by \eqref{eq:bilip-N}, there holds
  \begin{equation}\label{eq:dist-ywhx}
        |y-\wh x| \ge \Big (1- \dfrac{ar}{1-r}\Big )r^{n-\ell-1} 
  \end{equation}
  and also $|x-\wh x|\le a r^{n-1}/2$. So 
  we can write
  \begin{eqnarray*}
    |e(y-x)-e(y-\wh x)| &=& \Big|\ln \dfrac{|y-x|}{|y-\wh x|} \Big | 
                       = \Big|\ln \Big | 1 + \dfrac{x-\wh x}{y-\wh x} \Big| \Big|\\
                        &\le&  \dfrac{a r^{n-1}/2}{\Big (1- \dfrac{ar}{1-r}\Big )r^{n-\ell-1}- a r^{n-1}/2}\\
                        &=&  \dfrac{a r^\ell}{2 \Big (1- \dfrac{ar}{1-r}\Big )- a r^\ell}.
  \end{eqnarray*}
  
  Summing over $\ell$ and $\cY_\ell$ as in $\Delta_2$, we get
  \begin{equation}\label{eq:delta2-bound}
    |\Delta_2| \le 2 \sum_{\ell=1}^{n-1} \dfrac{2^{\ell-1} a r^\ell}{2\Big (1- \dfrac{ar}{1-r}\Big )- a r^\ell}.
  \end{equation}
 Recall from \eqref{eq:condition-a} and \eqref{2a3} that the denominator is at least 
 $\frac85 - a r \geq \frac85 - \frac{3}{16} > 1$; then $|\Delta_2| \le a \sum_{\ell=1}^{+\infty} (2r)^\ell
 < 4r$. Taking $r$ small makes $\Delta_2$ as small as we want (independently of $n$).

  \medskip
  
  In order to estimate $\Delta_3$, we will use the fact that $y\in K_{n-1}$ is the barycenter of its $2$ 
  children $y_1,y_2\in K_n$ so the potential they create is not so different from that created by $y$.  
  Given $y\in K_{n-1}\backslash \{ \wh x\}$, we write 
\begin{align*}
    e(x-y_1) + e(x-y_2) &- 2e(x - \wh y)  
    = \ln \Big ( \dfrac{|x-y_1||x-y_2|}{|x-\wh y|^2}\Big) 
    \\
     &= \ln \Big | \dfrac{(x-\wh y + \wh y - y_1)(x-\wh y + \wh y - y_2)}{(x-\wh y)^2} \Big |\\
                                        &= \ln \Big | 1 + \dfrac{ \wh y - y_1 + \wh y-y_2}{x-\wh y} + \dfrac{(\wh y - y_1)(\wh y-y_2)}{(x-\wh y)^2}\Big |\\
                                        &= \ln  \Big | 1 - \dfrac{(\wh y - y_1)^2}{(x-\wh y)^2}\Big |
  \end{align*}
  because $\wh y - y_2 = - (\wh y - y_1)$. 
  Hence for $\wh y\in \cY_\ell$, \eqref{eq:prop-log} and \eqref{eq:dist-ywhx} yield
     \begin{align*}
       |e(x-y_1) + e(x-y_2) - & 2e(x - \wh y)| =-\ln\Big | 1 - \dfrac{|\wh y-y_1|^2}{|x- \wh y|^2}\Big|\\
                                             &\le \dfrac{|\wh y-y_1|^2}{|x- \wh y|^2-|\wh y-y_1|^2}
                                             \le \dfrac{ a^2r^{2 \ell}}{4\big ( 1 - \dfrac{ar}{1-r}\big )^2-a^2r^{2 \ell}} \, ,
    \end{align*}
 so simplifying and summing over $y\in \cY_\ell$ and $\ell$ gives 
    \begin{equation}\label{eq:delta3-bound}
      |\Delta_3| \le \sum_{\ell=1}^{n-1} \dfrac{2^{\ell-1} a^2 r^{2\ell}}{4\big ( 1 - \dfrac{ar}{1-r}\big )^2-a^2r^{2 \ell}}.
    \end{equation}
    As before, this can be made arbitrarily small (independently of $n$) when $r$ is small. 
    Thus we can choose $r$ so that \eqref{eq:iter-step-n} is satisfied.

    We also said that $|c_n-c_{n-1}| \lesssim n 2^{-n}\ln a$, and this follows from the computations above; the extra
    $n$ comes from the fact that we have a $\ln(r^{n-1})$ hidden in $\Delta_1$.

  \end{proof}
  We just proved that we could construct Cantor sets for which the oscillation of $g_n$ tends to zero when $n$ tends to infinity. 
  We will see in Section~\ref{sec:Green} that these Cantor set satisfy the conclusions of Theorem~\ref{thm:main}.

\subsection{Variants of the construction}
\label{sec:Npieces}

We checked (this is easy but boring) that with $a= 2.217$ and $r= 0.0623$
the estimates above yield $|\Delta_2|+ |\Delta_3| \le \ln a/4$; this means that we can take
these values in the construction (and prove Lemma \ref{mainLemma} and then Theorem~\ref{thm:main}). 
This was obtained with some optimization on $a$ and $r$, using the bounds \eqref{eq:delta2-bound} 
and \eqref{eq:delta3-bound} above.
For those values we get $\delta = -\ln(2)/\ln(r)> 0.2497$, so we can take $\delta_M = 0.2497$
and get examples in Theorem~\ref{thm:main} that are contained in the line.

\ms

We also did some computations with other constructions of $K \subset \R^2$,
which lead to slightly better estimates (because the sets $K$ there are more spread out),
and for some we obtain a dimension $\delta > 0.4$ (for variants of the four corner Cantor set, described below).
However, with these variants, we are not able to get close to $\delta = 1/2$.
Of course this does not mean that other, more subtle constructions, cannot yield $\delta > 1/2$, and this would
not contradict Tolsa's theorem \cite{Tolsa2023ADreg}.

\ms

There are several ways to obtain a Cantor set $K \subset \R^2$ satisfying \eqref{eq:measures-eq}  
with larger dimensions, leaving the line. A simple variant of the construction above consists in replacing
the universal set $\cE = \{ -1, 1 \}^{\bN \sm \{ 0 \}}$ used above with 
$\cE(N) = A_N^{\bN \sm \{ 0 \}}$, where $A_N \subset \R^2 \sim \bC$ is the set of $N$-th roots of
unity. Then we can define $f$ exactly as in \eqref{2a1}, but with coordinates $\varepsilon_k \in A_n$.
This gives a $N$-adic Cantor set in the plane, which as before is bi-Lipschitz equivalent to the fractal set
obtained with $a_{k}(\varepsilon) = 2$, and which is Ahlfors regular of dimension $\delta = - \ln(N)/\ln(r)$.

\ms

For such a set we can keep the statement of Lemma \ref{mainLemma} as it is, with $2$ replaced
with $N$ in all occurrences; the proof goes the same way, except for the following details.
First the precise formula for $\Delta_1$ is slightly different, because now
$\Delta_1 = \sum_{\wt x} e(\wt x-x)$, where the sum runs over the $N-1$ siblings of
$x$, takes a different value. However, the symmetry of $A_N$ implies that this value only depends on our choice of $a_{n-1}(\varepsilon)$, and is thus the same 
for $x$ and its siblings. This allows us to choose $a_{n-1}(\varepsilon)$ as we did before.
For $\Delta_2$ we need to estimate the $e(y-x) - e(y-\wh x)$ slightly differently (we cannot use
the precise algebra with the logs in the same way), but the main terms are still when $y \in K_{n-1}$
is a sibling of $\wh x$, and for the other terms we can use the size of $\nabla e(y-u)$. Finally
$\Delta_3$, which is a term of order $2$ because of the symmetry of $A_n$ (or the fact that
$y$ is the barycenter of its children), is usually somewhat smaller and can be handled with the second derivative.
We skip the computations, and merely record that for $N=4$, $a=2.63$ and $r = 0.033$, we obtained 
the desired estimate $4^n(\Delta_2+\Delta_3) < \frac{3}{32} \ln(a)$, which allows us to take $\delta > 0.406$.
\ms

Taking $N$ very large does not seem to improve the estimates. 
Another tempting variant is to keep $N=2$ as above, but alternatively use the sets $A = \{ -1, 1 \}$
and $A' = i A$ (turned by $90$ degrees). This gives an approximation of $K$ by rectangles whose
sidelengths are in proportion of $\sqrt 2$ (and lay alternatively horizontally and vertically).
This does not seem to be much better than the squares.

 \section[Green's function and the harmonic measure]{Estimating Green's function and the harmonic measure}
 \label{sec:Green}

In this section, we prove the estimates \eqref{eq:measures-eq} and \eqref{eq:green-dist} for the Cantor sets $K$ 
constructed in the previous section. The main tool is the estimate \eqref{eq:iter-step-n} in Lemma \ref{mainLemma}, 
from which we can obtain good estimates on the Green's function and harmonic measure with pole at infinity 
in the domain $\Omega=\R^2\sm K$. 

\ms

Before doing so, we state one useful boundary estimate valid for positive harmonic functions in our domain $\Omega$. This estimate relies on the fact that $\Omega$ is a uniform domain, i.e., it has interior Harnack chains and interior corkscrew points. We use this fact without proof, since it shown in \cite[Lemma 2.2]{DFM-Memoirs} that this is a consequence of the $\delta$-Ahlfors regularity of $\partial \Omega$ with $\delta < 1$, though in the case of our Cantor sets, one could prove the existence of Harnack chains and interior corkscrews rather explicitly.

\begin{thm}[Theorem 1 in \cite{Aikawa01}] \label{thm:bdry_harnack} 
    There exists $A_0 >1$ depending only on $\Omega$ so that for any $x_0 \in \partial \Omega$ and for all $R >0$ sufficiently small, the following holds. Whenever $u, v$ are positive, bounded harmonic functions in $\Omega \cap B(x_0, A_0 R)$ vanishing on $\partial \Omega \cap B(x_0, A_0R)$, then 
    \begin{align*}
        C^{-1} \dfrac{u(y)}{v(y) } \le \dfrac{u(x)}{v(x)} \le C \dfrac{u(y)}{v(y)},
    \end{align*}
    for all $x,y \in \Omega \cap B(x_0, R)$. Here, $C$ depends only on the domain $\Omega$. 
\end{thm}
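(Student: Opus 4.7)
My plan is to follow Aikawa's approach to boundary Harnack on uniform domains, which reduces the statement to a ``change of pole'' estimate comparing $u/v$ to harmonic measure. Fix $x_0 \in \partial\Omega$, a scale $R > 0$, and an interior corkscrew point $A = A_R(x_0)$ with $\dist(A,\partial\Omega) \gtrsim R$ and $|A-x_0| \le R$. I would aim to prove
\begin{equation*}
\frac{u(x)}{u(A)} \simeq \omega^x(\partial\Omega \cap B(x_0, 2R)), \qquad x \in \Omega \cap B(x_0, R),
\end{equation*}
with implicit constants depending only on $\Omega$, not on $u$. Applied to $v$ as well, this gives $u(x)/v(x) \simeq u(A)/v(A)$ for every $x \in \Omega \cap B(x_0, R)$, and the same estimate at $y$ yields the boundary Harnack inequality.

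The key comparison rests on two ingredients. First, the Carleson estimate $u(x) \le C u(A)$ for $x \in \Omega \cap B(x_0, R)$: I would establish it by the standard iteration argument, in which one supposes the ratio $u(x)/u(A)$ is very large at some point near $\partial\Omega$ and uses the Harnack chain condition to find nearby points where the ratio grows geometrically, eventually reaching $A$ and contradicting Harnack's inequality. Second, a non-degeneracy bound $\omega^A(\partial\Omega \cap B(x_0, R)) \ge c_0 > 0$, which follows from a Wiener-type estimate based on the lower bound $\mathrm{cap}(K \cap B(x_0, R)) \gtrsim R^\alpha$ for some $\alpha > 0$ provided by the Ahlfors regularity of $K$. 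Combining the Carleson estimate with a maximum-principle comparison between $u$ and $C u(A)\cdot \omega^\cdot(\partial\Omega \cap B(x_0, 2R))$ on $\Omega \cap B(x_0, 2R)$ gives the upper bound; the matching lower bound is a dual argument using the non-degeneracy.

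The main obstacle is verifying the Carleson step in our higher-co-dimension setting. In the classical NTA case with $\dim \partial\Omega = n-1$, barrier functions at the boundary can be built directly from harmonic measures of half-spaces, but here the Laplacian ``sees'' $K$ only through its logarithmic capacity. Fortunately the Ahlfors regularity of $K$ with $\delta > 0$ supplies precisely the scale-invariant capacity lower bound one needs to construct such barriers; this is the content of \cite[Lemma 2.2]{DFM-Memoirs}, which the authors invoke to guarantee that $\Omega$ has interior corkscrews and Harnack chains. Once these uniform-domain properties are granted, the rest of the proof is a faithful adaptation of Aikawa's argument, and I would simply quote it as the authors do rather than reproduce it in full.
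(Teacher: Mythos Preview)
The paper gives no proof of this statement at all: it is stated as a direct citation of Theorem~1 in \cite{Aikawa01}, used as a black box, with the only accompanying remark being that $\Omega = \R^2 \setminus K$ is a uniform domain by \cite[Lemma~2.2]{DFM-Memoirs} (so that Aikawa's hypotheses are met). There is therefore nothing in the paper to compare your argument against.

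That said, your sketch is a faithful outline of Aikawa's method (Carleson estimate via Harnack chains, capacity-density/Wiener-type non-degeneracy of harmonic measure at the corkscrew point, then the change-of-pole comparison), and you correctly identify that the only issue specific to the higher-co-dimension setting is the uniform-domain verification, which the paper outsources to \cite{DFM-Memoirs}. Since you yourself conclude by saying you would ``simply quote it as the authors do,'' your proposal and the paper's treatment are effectively the same: cite \cite{Aikawa01} and note that $\Omega$ is uniform.
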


Another important result which is particularly useful for Cantor sets is the following comparison between Green's function and the value of the harmonic measure in parts of the set which are separated from the rest by an annulus. We found this statement as Lemma 3.4 in \cite{MakarovVolberg1986}.   
\begin{lem}\label{lem:MV}
    Suppose that the set $K\subset \R^2$ is Ahlfors regular, and that there is an annulus $ A:= B(x,(1+\eta) R)\backslash B(x,(1-\eta)R)\subset \R^2\backslash  K$. Fixing $x_0\in \R^2$ with $\dist(x_0,K)\ge 1$, for $y\in A$, there holds:
    \begin{equation}\label{eq:MVlemma}
        \omega^{x_0}(K\cap B(x,R)) \simeq G(x_0, y),
    \end{equation}
    where the constant in the inequalities 
    depends only on $\eta$ and the Ahlfors regularity constant for 
    $K$.
\end{lem}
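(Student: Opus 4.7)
\medskip\noindent\emph{Proof plan.}
Set $V := B(x, (1-\eta)R)$, $K_1 := K \cap B(x, R) = K \cap V$ (the last equality because $A \cap K = \emptyset$), $K_2 := K \sm K_1$, and $D' := \Omega \sm \overline V$. By Harnack's inequality applied in the annulus $A$, $G(x_0, y)$ is comparable (with constants depending only on $\eta$) for all $y \in A$, so it is enough to establish the comparison at a single reference point $y_* \in A \cap S(x, R)$. The plan is to identify both $\omega^{x_0}(K_1)$ and $G(x_0, y_*)$ with the single intermediate quantity $\omega^{x_0}_{D'}(\partial V)$.

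For the harmonic-measure side, I will first show that $\omega^w(K_1) \simeq 1$ for every $w \in \partial V$ by a two-step Brownian motion argument: starting from $w$, BM enters $V$ with probability bounded below in terms of $\eta$ alone (by the annulus geometry), and once inside $V$ it hits $K_1 = K \cap V$ before exiting with probability bounded below because $K_1$ has positive logarithmic capacity at scale $R$, a consequence of the Ahlfors regularity of $K$. Combined with the strong Markov property applied to BM from $x_0$ at the first hit of $\partial V \cup K_2$, this gives
\begin{equation*}
  \omega^{x_0}(K_1) \;=\; \int_{\partial V} \omega^w(K_1) \, d\omega^{x_0}_{D'}(w) \;\simeq\; \omega^{x_0}_{D'}(\partial V).
\end{equation*}
For the Green's function side, I will use the domain-decomposition identity
\begin{equation*}
  G(x_0, y_*) \;=\; G_{D'}(x_0, y_*) \,+\, \int_{\partial V} G(x_0, w)\, d\omega^{y_*}_{D'}(w),
\end{equation*}
together with Harnack in $\Omega$ (to replace $G(x_0, w)$ by $G(x_0, y_*)$ for $w \in \partial V$) and the annulus estimate $\omega^{y_*}_{D'}(\partial V) \leq 1 - c(\eta)$ (since from $y_*$ on the middle sphere of $A$ the Brownian motion has a definite probability of escaping $A$ outward and reaching $K_2$ first), to rearrange into $G(x_0, y_*) \simeq G_{D'}(x_0, y_*)$.

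The final step, which I expect to be the main obstacle, is to prove $G_{D'}(x_0, y_*) \simeq \omega^{x_0}_{D'}(\partial V)$, which closes the chain. Viewed as functions of $x_0 \in D' \sm \{y_*\}$ (with $y_*$ fixed), both are positive harmonic and vanish on $K_2$; I will apply Theorem~\ref{thm:bdry_harnack} near each boundary point of $K_2$ and chain these comparisons by interior Harnack in the uniform domain $D'$ to deduce that their ratio is essentially constant, then evaluate this constant at a reference corkscrew point near $\partial V$, where both quantities can be estimated directly. The difficulty is to control the length of this chain uniformly in $\eta$ and in the Ahlfors regularity constant of $K$; this is feasible because $\Omega$ is a uniform domain (cf.\ Lemma~2.2 of \cite{DFM-Memoirs}) and $\dist(x_0, K) \geq 1$ fixes the ambient scale, so the accumulated constants depend only on the stated parameters.
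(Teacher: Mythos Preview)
Your plan has a genuine gap in Step~2. From the domain decomposition identity you obtain
\[
G_{D'}(x_0,y_*) \;=\; G(x_0,y_*) \,-\, \int_{\partial V} G(x_0,w)\,d\omega^{y_*}_{D'}(w),
\]
and after invoking Harnack to replace $G(x_0,w)$ by $G(x_0,y_*)$ you get only
\[
G_{D'}(x_0,y_*) \;\ge\; G(x_0,y_*)\bigl(1 - C_\eta\,\omega^{y_*}_{D'}(\partial V)\bigr).
\]
For the rearrangement to yield a positive lower bound you need $C_\eta\,\omega^{y_*}_{D'}(\partial V)<1$, but $C_\eta$ is a Harnack constant (possibly large) while you only claim $\omega^{y_*}_{D'}(\partial V)\le 1-c(\eta)$; there is no reason for these to combine into something strictly less than one. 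Moreover, your justification of $\omega^{y_*}_{D'}(\partial V)\le 1-c(\eta)$ is incomplete: once Brownian motion escapes $A$ outward it must still hit $K_2$ before returning to $\partial V$, and this probability depends on the location and capacity of $K_2$ (hence on the Ahlfors regularity constant, not just $\eta$). A minor additional issue is that your $\partial V=S(x,(1-\eta)R)$ is the boundary of $A$, so it may meet $K$ and the Harnack step fails there; you would need $V=B(x,(1-\eta/2)R)$ instead.

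The paper avoids this difficulty by a more direct route: it fixes $z\in S(x,(1+\eta/2)R)$ and $y\in S(x,(1-\eta/2)R)$ and shows directly that \emph{both} $\omega^{z}(K\cap B(x,R))$ and $G(z,y)$ are $\simeq 1$. The lower bound on $\omega^z$ is Bourgain's lemma (essentially your argument for $\omega^w(K_1)\gtrsim 1$), while for $G(z,y)$ the upper bound comes from comparing with the Green function $\tilde G$ of the larger domain $\R^2\setminus K_1$ and using the capacity--density condition to bound $\tilde G(y,\infty)$. Once the comparison is known for this specific pole $z$, both $z\mapsto G(z,y)$ and $z\mapsto\omega^z(K_1)$ are positive harmonic functions on $\R^2\setminus(K\cup\overline{B(x,(1+\eta/2)R)})$ vanishing on $K_2$, and a single application of the boundary Harnack principle (Theorem~\ref{thm:bdry_harnack}, after an inversion to handle the unbounded component) transfers the comparison to the far pole $x_0$. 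This is essentially your Step~3, but applied to the correct pair of functions from the start, so no auxiliary domain $D'$ or rearrangement is needed.
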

\begin{proof}
    Fix $z\in \partial B(x,(1+\eta/2)R)$, by Bourgain's Lemma (Lemma 3.4 in \cite{AHMMMTV2016GAFA}) we have $\omega^{z}(K\cap B(x,(1-\eta) R)) \simeq 1$. Fix $z\in \partial B(x,(1+\eta/2)R)$ and $y\in \partial(B(x,(1-\eta/2)R))$, by the maximum principle, there holds $G^z(y)\gtrsim 1$. The Ahlfors regularity condition implies that $K$ satisfies the so-called capacity-density condition 
    (see for instance Section 2.1 of \cite{Tolsa2023ADreg}), 
    thus the logarithmic capacity of $K\cap B(x,(1-\eta)R)$ is comparable to $R$. 
    Let $\wt G$ be Green's function for the domain $\R^2\sm (K\cap B(x,(1-\eta)R))$ (with two variables and where, by invariance 
    under conformal mappings, $\infty$ is a point like any other). The 
    the capacity estimate implies that 
    $\wt G (y,\infty) \lesssim 1$. 
    By the maximum principle and Harnack's inequality, for $z\in \partial B(x,(1+\eta/2)R)$ there holds:
    \[
        G(z,y)\le \wt G(z,y) \simeq \wt G(y, \infty) \lesssim 1.
    \]
    We have proved that $G(z,y)\simeq \omega^{z}(K\cap B(x,R))$ for $y \in \partial B(x,(1-\eta/2)R)$ and $z\in \partial B(x,(1+\eta/2)R)$. Considering both sides of the comparison as bounded nonnegative harmonic functions of $z$ in $\R^2\sm (K\cup B(x,(1+\eta/2)R))$ which vanish on $K\sm B(x,(1+\eta/2)R)$, 
    it follows from Theorem \ref{thm:bdry_harnack}  (and after an inversion) that 
  \eqref{eq:MVlemma} holds for $x_0=z$, not to close to $K$.
\end{proof}

Denote by 
\begin{equation} \label{3b1}
\mu_n = 2^{-n} \sum_{x \in K_n} \delta_x
\end{equation}
the natural probability measure on $K_n$, and then let $\mu$ be the weak limit of the $\mu_n$. 
It is also the pushforward by $f : \cE \to K$ of the natural probability measure $\sigma$ on $\cE$. By Proposition \ref{pro:AR} and as $f$ is bi-Lipschitz, the Ahlfors regularity of $K$ follows.
Notice that $\mu$ is slightly different from $\H^\delta \mres{K}$, but Ahlfors regularity guarantees that the two measures are boundedly equivalent.

\ms 

As a first step of the proof, we show as promised that $\mu$ is the equilibrium measure for $K$. 

	\begin{lem}\label{lg1}
	The function defined by
	\begin{align*} 
	G(y) & = \int \ln(|x-y|) \; d\mu(x)
	\end{align*}
	is continuous on $\R^2$, harmonic on $\R^2 \sm K$, and constant on $K$.
	\end{lem}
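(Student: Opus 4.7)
The plan is to handle the three assertions in sequence, with the bulk of the effort devoted to the constancy of $G$ on $K$, which will follow from Lemma \ref{mainLemma} after a short approximation argument.

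For the preliminaries, a dyadic decomposition of $B(y,R) \cap K$ into annuli around $y$, combined with the Ahlfors regularity of $\mu$, yields the uniform estimate
$$\int_{B(y,R) \cap K} |\ln|x-y|| \, d\mu(x) \lesssim R^\delta (1 + |\ln R|)$$
for every $y \in \R^2$ and $0 < R \leq 1$. This makes $G$ finite everywhere, and a standard splitting argument (the integral over a small ball around any fixed $y_0$ is uniformly small by the display, and the integral over the complement is uniformly continuous in $y$) gives continuity of $G$ on $\R^2$. For $y_0 \notin K$, one has $\dist(y_0,K) > 0$, the kernel $y \mapsto \ln|x-y|$ is smooth and harmonic in a neighborhood of $y_0$ uniformly in $x \in K$, and differentiation under the integral sign gives $\Delta G \equiv 0$ off $K$.

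For the main step, fix $y = f(\varepsilon) \in K$ and let $y_n := f_n(\varepsilon_1, \ldots, \varepsilon_n) \in K_n$. The claim is that
$$G(y) = \lim_{n \to \infty} g_n(y_n).$$
Granting this, Lemma \ref{mainLemma} gives $|g_n(y_n) - c_n| \leq 2^{-n-1} \ln a$ together with $|c_{n+1} - c_n| \lesssim (n+1)\, 2^{-n-1} \ln a$, so $\{c_n\}$ is Cauchy, converges to some constant $c$, and $G(y) = c$ independently of $y \in K$.

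To prove the claim, decompose $K$ into the $n$th generation cells $Q_z := f\bigl(\{\varepsilon' \in \cE : \varepsilon'_k = z_k \text{ for } k \leq n\}\bigr)$ for $z \in K_n$; note $\mu(Q_z) = 2^{-n}$ and $\diam(Q_z) \lesssim r^n$ by Proposition \ref{pro:bilip}. Split $G(y)$ as the integral over $Q_{y_n}$ plus the sum over $z \neq y_n$ of the integrals over $Q_z$. The first piece is bounded by $C\, 2^{-n} n$ from the preliminary estimate applied with $R \sim r^n$, hence is negligible. For $z \neq y_n$ with $m := m(z, y_n)$, Proposition \ref{pro:bilip} yields $|z - y_n| \gtrsim r^{m-1}$ while $|x - z|, |y - y_n| \lesssim r^n$ for $x \in Q_z$; writing $\ln|x - y| = \ln|z - y_n| + \ln|1 + u|$ with $|u| \lesssim r^{n - m + 1}$ (and using \eqref{eq:prop-log}) gives
$$\int_{Q_z} \ln|x - y| \, d\mu(x) = 2^{-n} \ln|z - y_n| + O\bigl(2^{-n} r^{n-m+1}\bigr).$$
The main terms sum to exactly $g_n(y_n)$, and since $\#\{z \in K_n : m(z, y_n) = m\} = 2^{n-m}$, the total error is controlled by $\sum_{m=1}^n 2^{-m} r^{n-m+1} = O(2^{-n})$, using $2r < 1$. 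The only place requiring care is this final bookkeeping, but the exponential decay makes it routine.
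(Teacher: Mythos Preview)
Your argument is correct, and it takes a somewhat different route from the paper's.

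The paper proves constancy of $G$ on $K$ by passing through auxiliary points \emph{off} the approximating sets: for $x_0 = f(\varepsilon_0)\in K$ it picks $y_n$ with $|y_n - x_n| = r^{n-1}/2$ (where $x_n = f_n(\varepsilon_0)$), introduces the full discrete potential $\tilde g_n(y) = 2^{-n}\sum_{x\in K_n} e(y-x)$, and then chains together four comparisons: continuity of $G$ gives $G(x_0)\approx G(y_n)$; weak convergence $\mu_m \to \mu$ gives $G(y_n)\approx \tilde g_m(y_n)$; a stability estimate $|\tilde g_m(y_n)-\tilde g_n(y_n)|\le C2^{-n}$ bridges generations; and finally $\tilde g_n(y_n)$ is compared to $c_n$ (estimate (3.5)).

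You instead stay on $K$ and compare $G(y)$ directly with $g_n(y_n)$ by decomposing $K$ into the level-$n$ cells $Q_z$, isolating the self-cell $Q_{y_n}$ (controlled by the Ahlfors regularity tail bound), and estimating the remaining cells via $|u|\lesssim r^{n-m+1}$ and the annular count. This is shorter: it collapses the paper's four-step chain into a single comparison and avoids both the off-boundary evaluation points and the weak-convergence step. The trade-off is that the paper's intermediate estimate (3.5) on $\tilde g_n$ at corkscrew-type points is set up in a way that dovetails with the proof of the next lemma (the pointwise bound $G^\infty(y)\simeq \dist(y,K)^\delta$), whereas your argument, being tailored to points of $K$ itself, does not directly produce that by-product.
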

    \begin{proof}
     
     First observe that the integral converges beautifully, even when $y \in K$, because if we set
     $A_k(y) = \big\{x \in \R^2 \, ; \, 2^{-k} \leq |y-x| < 2^{-k+1} \big\}$
     for $k \in \bZ$, then the Ahlfors regularity of $\mu$ yields
 \begin{equation} \label{3b2}
\int_{A_k(y)} |\ln(|x-y|)| \; d\mu(x) \leq (|k|+1) \ln(2) \mu(A_k) \leq C (|k|+1) 2^{-k \delta}
\end{equation}
   and the corresponding series converges (because the large $k$ give no contribution).
  Also $G$ is harmonic on $\R^2 \sm K$, by the dominated convergence theorem and because each 
  $x \mapsto \ln(|x-y|)$ is harmonic on $\R^2 \sm \{ y \}$.
  
  Next consider the potential created by the approximating probability measure $\mu_k$, i.e., set
   \begin{equation} \label{3b3}
\tilde{g}_n(y) = \int \ln(|x-y|) \; d\mu_n(x) = 2^{-n} \sum_{x \in K_n} e(y-x)  
        \ \  \text{ for } y\notin K_n.
\end{equation}

      We claim that for any $x_0 \in K_n$ and any $y$ with $|y-x_0| = r^{n-1}/2$
      (we'll see that this is a way to make sure that $y$ is far from $K_n$ and the further
      $K_m$), we have 
    \begin{align}
    c_n + 2^{-n}\ln (r^{n-1}/2) \le \tilde{g}_n(y) \le c_n + 2^{-n}\ln (r^{n-1}/2) + C 2^{-n}, \label{ga1}
    \end{align}
    where $C >1$ is some constant independent of $n$, $x_0$, and $y$. 
    We no longer care about getting precise estimates for $C$. 

    \medskip
    
    Indeed, let $x_0 = f_n(\varepsilon_0) \in K_n$ and such a $y$ be given. 
   Set as previously 
    \begin{align*}
    \cY_\ell & = \{ x = f_n(\varepsilon) \in K_n \setminus \{x_0\} \; : \; m(\varepsilon_0, \varepsilon) = n-\ell\} 
    \end{align*}
    for $\ell=1,\dots , n$. 
    Recall that $m(\varepsilon_0, \varepsilon)$ is the first index at which the sequences 
    $\varepsilon_0$ and $\varepsilon$ disagree, that 
    \begin{align}
    \# \cY_\ell = 2^{\ell-1}, \label{ga2}
    \end{align} 
    and that by \eqref{eq:bilip-N} and \eqref{eq:condition-a},  for  $x \in \cY_\ell$ we have
    \begin{align} \label{ga3}
    |x_0 - x| &= |f_n(\varepsilon_0) - f_n(\varepsilon)|
    \ge \Big ( 1 - \dfrac{ar}{1-r} \Big )r^{m(\varepsilon_0, \varepsilon)-1} 
    \nonumber
   \\ & \ge \Big ( 1 - \dfrac{ar}{1-r} \Big )r^{n-\ell-1} 
   \geq \frac45 \, r^{n-\ell-1} 
    \end{align}
    which is much larger than $|y-x_0| = r^{n-1}/2$ because $\ell \geq 1$. 
    Now 
    \begin{align}
    \tilde{g}_n(y) - g_n(x_0) & = 2^{-n} \left( \ln(|y-x_0|) + \sum_{x \in K_n \setminus \{x_0\}} \ln \left ( \dfrac{|y-x|}{|x_0-x|} \right) \right) \nonumber \\
    & = 2^{-n} \ln(r^{n-1}/2) + 2^{-n}\sum_{x \in K_n \setminus \{x_0\}} \ln  \left |\dfrac{y-x}{x_0-x} \right |, \label{ga4}
    \end{align}
    and we can write
    \begin{align*}
    1 - \dfrac{|y-x_0|}{|x_0-x|} \le \dfrac{\left |y-x \right |}{\left| x_0-x \right| } 
    \le 1 + \dfrac{|y-x_0|}{|x_0-x|}.
    \end{align*}
    This gives for $x \in \cY_\ell$,
    \begin{equation}
    \left | \dfrac{|y-x|}{|x_0-x|} -1 \right|  \le \dfrac{|y-x_0|}{|x_0-x|}  
    \leq \frac58 \, r^{\ell} \leq r
     \label{ga5}
    \end{equation}
    by virtue of \eqref{ga3}. Hence we estimate the second term in \eqref{ga4}:
     \begin{align} \label{ga6} 
     \Big |2^{-n}\sum_{x \in K_n \setminus \{x_0\}} &\ln  \big|\dfrac{y-x}{x_0-x} \big | \Big |
      \le  2^{-n} \sum_{\ell =1}^n \sum_{x \in \cY_\ell } 
     \left| \ln \left( \dfrac{|y-x|}{|x_0-x|} \right) \right | \nonumber \\
    & \lesssim 2^{-n} \sum_{\ell=1}^n \sum_{x \in \cY_\ell} \dfrac{ r^{\ell}}{1-r^{\ell}} 
    \lesssim  2^{-n}r \sum_{\ell=1}^n (2r)^{\ell-1} 
     \lesssim 2^{-n}, 
    \end{align}
     by \eqref{ga2} and since $2r <1$. As the first term in the difference \eqref{ga4} is constant in $y$, the estimate \eqref{ga6} and the conclusion of Lemma \ref{mainLemma} readily give \eqref{ga1}.
    
    \ms
    
    Next, we claim that whenever $ m \ge n$, $x_0 \in K_n$, and $y$ is such that $|y-x_0| = r^{n-1}/2$, there holds 
    \begin{align}
        | \tilde{g}_n(y) - \tilde{g}_m(y) | \le C 2^{-n} \label{ga7}.
    \end{align}
    The proof of \eqref{ga7} is much like the estimate \eqref{ga6}, so we only sketch it. 
    One first checks that the distance estimate \eqref{ga3} still holds when replacing $x\in \cY_\ell$ by any of its descendants; the constant $\frac45$ just becomes a little smaller. The 
    main idea is to write 
    \begin{align*}
        \tilde{g}_m(y) -\tilde{g}_n(y)  & = 2^{-n} \Big(  \sum_{x \in K_n} 2^{-(m-n)} \sum_{z \in \mathrm{Ch}_m(x)} 
        \ln \Big |\dfrac{y-z}{y-x}\Big| \Big),
    \end{align*}
    where $\mathrm{Ch}_m(x)$ are the descendants $z \in K_m$ of $x\in K_n$ (there are $2^{m-n}$ of them). 
     We first take care of the descendants $z$ of $x_0$. Notice that by \eqref{2a1} 
     and \eqref{eq:condition-a}, they satisfy
     \[
    |z-x_0| \leq \sum_{\ell > n} \frac{a}{2} \, r^{\ell -1} \leq a r^n \ll |y-x_0|/2
     \]
  so $\ln \big |\dfrac{y-z}{y-x_0}\big| \leq 2$ and the total contribution to $| \tilde{g}_n(y) - \tilde{g}_m(y)|$
  of all these terms is at most $C 2^{-n}$.
    The rest of the sum can be split into the $\cY_\ell$ for $\ell=1,\dots,n$, and then one notices again 
    that $|z-x|  \ll |y-x|$ to conclude as before. So \eqref{ga7} holds.
    
    \ms
    
    We are ready to prove that $G = e \ast \mu$ is continuous on $\R^2$ and constant on $K$.
    For the continuity at the point $y$, we observe that for $t > 0$ and $z$ in $B(y,t)$, 
    \begin{align*}
    G(y) - G(z) =  \int_{B(y,2t)} &[e(x-y) - e(x-z)] d\mu(x)
    \\ &+ \int_{K \sm B(y,2t)} [e(x-y) - e(x-z)] d\mu(x).
   \end{align*}
    The first term is less than $C \ln(1+t) t^\delta$ by \eqref{3b2}, and (with $t$ fixed) the
    second term tends to $0$ when $z$ tends to $y$, for instance by dominated convergence.
    So $G$ is continuous; a look at the proof the would even show that it is $C^\alpha$ for any 
    $\alpha \in (0,\delta)$ but we do not need this. 
    
    \ms 
    
    We now prove that $G$ is constant on $K$.
     Fix $x_0 = f (\varepsilon_0) \in K$, and let $x_n = f_n(\varepsilon_0) \in K_n$ be the finite 
     approximation of $x_0$.
     Then fix any $y_n$ with $|y_n - x_n| = r^{n-1}/2$. We can write for $m > n$, 
    \begin{align}
    	\left| G(x_0) - c_n  \right| & \le |G(x_0) - G(y_n)| + |G(y_n) - \tilde{g}_m(y_n)| \nonumber \\
    	& + | \tilde{g}_m(y_n) - \tilde{g}_n(y_n) | + | \tilde{g}_n - c_n| \label{ga8}.
    \end{align}
    For $n$ large, the first, third, and fourth terms in \eqref{ga8} can be made arbitrarily small, by the continuity of $G$, \eqref{ga7}, and \eqref{ga1}, respectively. Once $n$ is fixed, the second term in \eqref{ga8} can be made arbitrarily small when $m$ 
  is large, since $\mu$ is the weak limit of the probability measures $\mu_n$ of \eqref{3b1}.
	In view of this estimate, we see simultaneously that $\lim_{n \to \infty} c_n = c_\infty$ exists, and that $G$ is constant and equal to $c_\infty$ on $K$. This completes the proof of Lemma \ref{lg1}.
	\end{proof}
  
 Define $G^\infty$ on $\R^2$ by 
 \begin{align} \label{ga9}
G^\infty(y) = G(y) - c_\infty,
\end{align}   
where $c_\infty$ is the constant value of $G$ on $K$. 
Then $G^\infty$ is continuous on $\R^2$ and harmonic on $\R^2 \sm K$. It is also equivalent
to $\ln(|y|)$ at infinity, and the maximum principle (applied to large balls $B(0,R)$ so that $G > 0$ on
$\d B(0,R)$) show that $G^\infty(y) > 0$ on $\Omega = \R^2 \sm K$. Because of this, $G^\infty$ is equal
(maybe modulo a multiplicative constant) to the Green function on $\Omega$, with pole at infinity.
The uniqueness of the Green function here is a consequence of the Ahlfors regularity and good nontangential
access to $K$, through the H\"older variant of the comparison principle in Theorem \ref{thm:bdry_harnack}.
We refer to \cite{DEM21}, where we know that this is done, by lack of a better reference.
We first prove the estimate \eqref{eq:green-dist} for $G^\infty$.

   \begin{lem}\label{lem:greeninfinity}
   The Green function $G^\infty$ defined by \eqref{ga9} satisfies for $\dist(y, K) \le 1$,
   \begin{align} \label{ga10}
   G^\infty(y) \simeq \dist(y, K)^\delta,
   \end{align}
   where as before, $\delta$ is the Hausdorff dimension of $K$.
   \end{lem}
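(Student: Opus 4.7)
The plan is to combine the Makarov–Volberg type identification from Lemma \ref{lem:MV} with the well-separated cell structure of the Cantor set, rather than trying to squeeze the estimate directly out of \eqref{ga1} and \eqref{ga7} (which would require a delicate cancellation between the leading $n\,2^{-n}\ln r$ contributions coming from $\tilde g_n(y)-c_n$ and from $c_n-c_\infty$).

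First I would observe that $\mu$ is the harmonic measure of $\Omega=\R^2\sm K$ with pole at infinity. Indeed, Lemma \ref{lg1} together with $G(y)=\ln|y|+O(1)$ at infinity shows that $G^\infty$ is the Green function of $\Omega$ with pole at infinity; classical two-dimensional potential theory then identifies $\mu$ with $\omega^\infty$. Equivalently, $G^\infty$ is the logarithmic potential of $\mu$ minus the Robin constant of $K$, and the measure producing the Green function at infinity is, by definition, the equilibrium measure.

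Second, fix $y\in\Omega$ with $d:=\dist(y,K)\le 1$ and let $x_0\in K$ satisfy $|y-x_0|=d$. Write $x_0=f(\varepsilon)$ and use Proposition \ref{pro:bilip} to locate, at each scale, a good annulus around $x_0$ sitting in $\Omega$. Precisely, for every $n\ge 1$ the level-$n$ cell of $K$ through $x_0$ has diameter at most $ar^n/(1-r)$, while by \eqref{eq:bilip-N} every point of $K$ outside the level-$(n-1)$ cell is at distance at least $(1-ar/(1-r))r^{n-2}$ from $x_0$. Because of \eqref{eq:condition-a} and \eqref{2a3}, these two bounds differ by a large multiplicative factor, so we obtain an annulus $A_n=B(x_0,R_n(1+\eta))\sm B(x_0,R_n(1-\eta))\subset\Omega$ with $R_n\simeq r^{n-1}$ and a fixed $\eta\in(0,1/2)$ depending only on $a$ and $r$. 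Varying $n$, the annuli $A_n$ cover the punctured neighborhood of $x_0$ in $\Omega$, so we can pick $n$ so that $y\in A_n$, and for this $n$ we have $R_n\simeq d$.

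Third, I would apply the pole-at-infinity analogue of Lemma \ref{lem:MV} (which has the same proof, the only ingredient being the capacity–density condition for $K$, itself a consequence of Ahlfors regularity) to the annulus $A_n$: for every $y'\in A_n$,
\[
G^\infty(y')\;\simeq\;\omega^\infty\bigl(K\cap B(x_0,R_n)\bigr)\;=\;\mu\bigl(K\cap B(x_0,R_n)\bigr)\;\simeq\;R_n^{\delta}\;\simeq\;d^{\delta},
\]
where the third $\simeq$ is the Ahlfors regularity of $\mu$ (Proposition \ref{pro:AR} combined with the bi-Lipschitz property of $f$ from Proposition \ref{pro:bilip}). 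Since $y\in A_n$, this gives \eqref{ga10}.

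The main obstacle is in the second step: verifying quantitatively that, in the regime \eqref{2a3} of parameters, Proposition \ref{pro:bilip} does yield an $\Omega$-annulus $A_n$ at every scale with inner and outer radii in a fixed ratio. This is a mild ``well-separated cells'' statement: it is essentially the condition that $ar^n/(1-r)$ be much smaller than $(1-ar/(1-r))r^{n-2}$, i.e.\ that $a^2r^3/(1-r)^2$ be small, which is immediate from \eqref{eq:condition-a} and \eqref{2a3}. Once this is in hand, the rest is a direct application of Lemma \ref{lem:MV} and Ahlfors regularity.
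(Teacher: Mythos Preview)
Your approach is correct and different from the paper's. The paper computes directly from the integral $G^\infty(y_0)=\int\ln(|y_0-z|/|x_0-z|)\,d\mu(z)$ at a vertical corkscrew point $y_0=x_0+(0,R)$: the lower bound exploits that $K\subset\R$ makes the integrand nonnegative (so the contribution of $B(x_0,R/2)$ alone already gives $\gtrsim R^\delta$), and the upper bound is a dyadic annular decomposition together with Ahlfors regularity of $\mu$. You instead identify $\mu=\omega^\infty$ (implicit in the paragraph preceding this lemma) and feed the Cantor separation gaps into Lemma~\ref{lem:MV}, reducing \eqref{ga10} to Ahlfors regularity of $\mu$. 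Your route is more general---it makes no use of $K\subset\R$, so it handles the planar variants of Section~\ref{sec:Npieces} and the product sets of Section~\ref{Sn} without the extra care the paper flags for the lower bound there---while the paper's route is more self-contained, avoiding both the equilibrium-measure identification and Lemma~\ref{lem:MV}. One small correction: your annulus should separate the level-$n$ cell from points \emph{outside the level-$n$ cell} (at distance $\ge(1-ar/(1-r))r^{n-1}$), not outside the level-$(n-1)$ cell; as written it would swallow the sibling cell. Also, the $A_n$ with a fixed $\eta$ do not literally cover a punctured neighborhood; when $d$ sits near a gap endpoint you need a short Harnack chain inside the gap annulus (where $\dist(\cdot,K)\simeq r^{n-1}$ throughout) to reach the point where Lemma~\ref{lem:MV} applies, but this is routine.
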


   \begin{proof}
We give a specific argument, which exploits the fact that $K$ is contained in the line. 
       Fix $R\in (0,1\,]$, and a point $y$ with $\dist(y,K)=R$. 
       Also choose $x_0 \in K$ such that $|x-x_0| = R$. Then consider the corkscrew point
  $y_0 = x_0 \pm (0,R)$. There is a Harnack chain from $y_0$ to $y$, which we can make very simple
  here (because $K \subset \R$, and this is even easier if we choose the sign so that $y_0$ on the same side 
  of $\R$ as $y$), and since the Harnack inequality shows that $G(y)$ and $G(y_0)$ are comparable, it is enough
  to prove \eqref{ga10} for $y_0$. Let us assume, without loss of generality, that $y_0 = x_0 + (0,R)$, and write    
   \[
   G^\infty(y_0) = G(y_0)- c_\infty = G(y_0)-G(x_0) 
            = \int \ln \dfrac{|y_0-z|}{|x_0-z|}d \mu(z).
   \]   
   For the lower bound, observe that $\ln(|y_0-z|/|x_0-z|) \ge 0$ for $z \in K$, because
   $z_0$ lies directly above $x_0$. Then, even with the sole contribution of $B = B(x_0, R/2)$, we get that
   \[
   G^\infty(y_0)  \geq \int_B \ln \dfrac{|y_0-z|}{|x_0-z|}d \mu(z)
   \geq (\ln 2) \mu(B) \gtrsim R^\delta,
   \]  
   because $\mu$ is Ahlfors regular.
   
   \ms 
   
  For the upper bound, we proceed as for the continuity of $G$, and cut $K$ into the annuli
 $ A_k := K \cap (B(x_0, 2^k R) \setminus B(x_0, 2^{k-1} R))$,  $k \in \mathbb{Z}$.
  We start with $k \leq 2$ and say that
  \begin{align*}
\sum_{k \le 2} \int_{A_k} \ln \dfrac{|y_0-z|}{|x_0-z|} \; d\mu(z)  \lesssim \sum_{k \le 2} (|k|+1) \mu(A_k) 
\lesssim \sum_{k \le 2} (|k|+1) (2^k R)^\delta 
\lesssim R^\delta
\end{align*}
by Ahlfors regularity.
For $k > 2$ and $z \in A_k$, it is easy to see that since $y_0 = x_0 + (0, R)$
and $|z-x_0| \geq 2^{k-1} R \geq 2R$, we have
\[1 \leq |y_0-z|/|x_0-z| \leq 1 + C 2^{-k}, 
\]
which yields  $0 \le \ln(|y_0-z|/|x_0-z|) \le 2^{-k}$ and 
\begin{align*}
\sum_{k > 2} \int_{A_k} \ln \dfrac{|y_0-z|}{|x_0-z|} \; d\mu(z) 
&  \lesssim \sum_{k > 2} 2^{-k} \mu(A_k) 
\lesssim R^\delta \sum_{k > 2} 2^{k(1-\delta)} \lesssim R^\delta,
\end{align*}
where again we have used the $\delta$-Ahlfors regularity of $\mu$ and now
the fact that $\delta <1$. We sum all the terms and $G^\infty(y) \lesssim R^\delta$, as in \eqref{ga10}.

\ms

When $K$ is obtained with slightly more complicated constructions, as in Subsection \ref{sec:Npieces},
we need to adapt the proof. 
Only the lower bound looks dangerous, and we need to find some corkscrew points $y_0$ where 
$G(y_0)$ is not too small. One way to do this is to start from a local corner of $K$, and move away in the
opposite direction from $K$. There the fact that $\nabla G(y) = \int_K \frac{y-x}{|y-x|^2} d\mu(x)$, with 
main contributions that do not cancel, helps. But we leave the details.
\end{proof}

 It is a simple consequence of Theorem \ref{thm:bdry_harnack} that \eqref{ga10} also 
 holds for the Green function for $\Omega$ with far enough pole $x \in \Omega$: 
   
  \begin{cor}\label{cor:finite_pole}
  There is a constant $C \geq 1$, which depends only on $\delta$
  (and for the next section, the ambient dimension $n$), such that if
  $x \in \Omega = \R^2 \sm K$, with $\dist(x, K) > C$, then for 
  all $y$ with $\dist(y,K) \le 1$, we have
   \begin{align*}
   C^{-1} \dist(y, K)^\delta \leq G^x(y) \leq C  \dist(y, K)^\delta,
   \end{align*}
   where $G^x(y)$ denotes the Green 
   function for $\Omega$ (and $-\Delta$) with pole at~$x$.
   \end{cor}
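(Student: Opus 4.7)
The plan is to reduce to the pole-at-infinity case already handled by Lemma \ref{lem:greeninfinity}, using the boundary Harnack principle (Theorem \ref{thm:bdry_harnack}) to compare $G^x$ with $G^\infty$. Both functions are positive harmonic on $\Omega$ away from their respective poles, and both vanish continuously on $K$; the vanishing follows from the Wiener criterion together with the capacity density condition for $K$, which is a consequence of its Ahlfors regularity.

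Choose $C$ large enough that, whenever $\dist(x,K) > C$, the pole $x$ lies outside some fixed compact neighborhood $N$ of $K$ containing $\{y : \dist(y,K) \le 1\}$. Then cover the compact set $K$ by finitely many balls $B(z_j, R_0)$ with $z_j \in K$, where $R_0$ is small enough that Theorem \ref{thm:bdry_harnack} applies in each ball $B(z_j, A_0 R_0)$, and also so that these enlarged balls avoid $x$. In each such ball the theorem yields
\[
G^x(y)/G^\infty(y) \simeq G^x(\xi_j)/G^\infty(\xi_j)
\]
for some fixed corkscrew point $\xi_j \in \Omega \cap B(z_j, R_0)$. I then chain these local comparisons through overlapping balls and across the bulk region $\{y : c_0 \le \dist(y,K) \le 1\}$ via interior Harnack inequalities, available through the Harnack chain condition that follows from the uniform domain property of $\Omega$ (valid since $\dim K < 1$; see \cite{DFM-Memoirs}). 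Since $x$ is at controlled distance from this entire region, the chain can be taken to avoid $x$, and I conclude that $G^x/G^\infty$ is comparable to a single positive constant $c(x)$ throughout $\Omega \cap N$.

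Combining this comparison with Lemma \ref{lem:greeninfinity} yields $G^x(y) \simeq c(x) \dist(y,K)^\delta$ for all $y$ with $\dist(y,K) \le 1$, which is the desired conclusion after absorbing $c(x)$ into the constant; as long as $\dist(x,K)$ ranges over a bounded set away from $K$, the quantity $c(x)$ remains bounded above and below. The main (and only mild) obstacle is that Theorem \ref{thm:bdry_harnack} is stated only for sufficiently small radii, so a single application does not cover $K$; instead one must perform the covering and chaining above, but this is routine because $K$ is compact and Ahlfors regular and the covering has bounded multiplicity.
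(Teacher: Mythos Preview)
Your proof is correct and follows the same strategy as the paper's: compare $G^x$ to $G^\infty$ via Theorem~\ref{thm:bdry_harnack} and then apply Lemma~\ref{lem:greeninfinity}. The paper's execution is more direct, however: it applies the boundary Harnack principle \emph{once}, in a single large ball $B(w,A_0(2+\diam K)+1)$ centered at a point $w\in K$ that swallows all of $K$ together with the entire region $\{\dist(y,K)\le 1\}$, so no covering of $K$ by small balls and no chaining is needed. Your caution about the ``sufficiently small $R$'' clause in Theorem~\ref{thm:bdry_harnack} is understandable, but since $\Omega=\R^2\setminus K$ is an unbounded uniform domain the principle in fact holds at every scale, which is what makes the one-ball argument go through. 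One small point: your final sentence controls $c(x)$ only when $\dist(x,K)$ ranges over a bounded set, whereas the corollary allows $x\to\infty$; to close this (a gap the paper also glosses over) use the symmetry $c(x)=G^z(x)/G^\infty(z)$ and the fact that $G^z(x)\to G^\infty(z)$ as $|x|\to\infty$.
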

    
  \begin{proof}
   Let $A_0$ be the constant from Theorem \ref{thm:bdry_harnack}. 
   Let $z \in \Omega$ be a fixed point with $\dist(z, K) = 1$ and $w  \in K$ with $|z- w| = 1$. 
   Fix $x \in \Omega$ with $|z - x| > A_0(2 + \diam \, K) + 2$. 
   Now for any $y \in \Omega$ with $\dist(y, K) \le 1$,
    the functions $G^\infty$ and $G^x$ are positive and harmonic in $\Omega \cap B(w, A_0(2+\diam \, K) + 1)$ that vanish on $K$. Moreover, $z, y \in B(w, \diam \, K + 2)$ so that applying Theorem \ref{thm:bdry_harnack} implies that 
   \begin{align*}
   \dfrac{G^\infty(y)}{G^x(y)} \simeq \dfrac{G^\infty(z)}{G^x(z)},
   \end{align*}
   which is just a constant (since $z$ is a fixed point). Using \eqref{ga10} we obtain 
   $G^x(y) \simeq  G^\infty(y) \simeq \dist(y, K)^\delta$. The constant depends on the geometric constants in the proof,
   which we can control uniformly as long as $\delta$ stays away from $0$.
   \end{proof}

   \begin{cor}
   There is a constant $C \geq 1$, which depends only on $\delta$
  (and for the next section, the ambient dimension $n$), such that if
  $x \in \Omega = \R^2 \sm K$, with $\dist(x, K) > C$, then 
  \begin{equation} \label{3a14}
C^{-1} \mu \leq \omega^x \leq C \mu,
\end{equation}
where $\omega^x$ denotes the harmonic measure on $K$, with pole at $x$.
  \end{cor}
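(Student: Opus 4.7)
The plan is to first establish the comparability $\omega^x \simeq \mu$ on the natural cells of the Cantor set $K$, and then extend to all Borel subsets by outer regularity. Denote by $Q_n(\ol\varepsilon) := \{f(\varepsilon) \, : \, \varepsilon \text{ starts with } \ol\varepsilon\}$ the clopen cell of $K$ at level $n$ corresponding to $\ol\varepsilon \in \cE_n$; these cells form a nested partition of $K$ by construction, and $\mu(Q_n(\ol\varepsilon)) = 2^{-n}$ because $\mu$ is the pushforward of the natural product probability on $\cE$.

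\textbf{Comparability on cells.} The first step is a geometric observation: for each $n$ and $\ol\varepsilon \in \cE_n$, one can find a center $c \in Q_n(\ol\varepsilon)$ (for instance the image under $f$ of the sequence extending $\ol\varepsilon$ by $1$'s), a radius $R \simeq r^n$, and a fixed $\eta \in (0,1)$, all with constants independent of $n$ and $\ol\varepsilon$, such that
\[
K \cap B(c, R) = Q_n(\ol\varepsilon) \qquad \text{and} \qquad B(c, (1+\eta) R) \setminus B(c, (1-\eta) R) \subset \Omega.
\]
Indeed, by Proposition \ref{pro:bilip} points of $Q_n(\ol\varepsilon)$ lie within distance $\frac{ar^n}{1-r}$ of $c$, while $K \setminus Q_n(\ol\varepsilon)$ lies at distance at least $\frac{4}{5}\, r^{n-1}$ from $c$ by \eqref{eq:condition-a}, so there is uniform room for a separating annulus. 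Now pick any $y$ in this annulus; then $\dist(y, K) \simeq r^n$. Applying Lemma \ref{lem:MV} (whose standoff hypothesis $\dist(x,K) \geq 1$ is satisfied as long as we take $C \geq 1$) and then Corollary \ref{cor:finite_pole} successively yields
\[
\omega^x(Q_n(\ol\varepsilon)) \simeq G^x(y) \simeq \dist(y,K)^\delta \simeq r^{n\delta} = 2^{-n} = \mu(Q_n(\ol\varepsilon)),
\]
where $r^\delta = 1/2$ by the dimension identity. The comparability constants here are uniform in $n$ and $\ol\varepsilon$ since all geometric parameters involved are.

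\textbf{Extension to Borel sets.} The cells form a clopen basis of the topology of $K$, so every relatively open $U \subset K$ is a countable disjoint union of cells (take the maximal cells of the cell tree contained in $U$); summing the comparability over this partition preserves the uniform constants, giving $\omega^x(U) \simeq \mu(U)$ for every relatively open $U$. Both $\omega^x$ and $\mu$ are finite Borel measures on the compact metric space $K$, hence outer regular, and a routine $\varepsilon$-approximation from outside by relatively open sets promotes comparability to all Borel subsets of $K$, which is \eqref{3a14}.

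The main difficulty is really bookkeeping rather than analysis: one must verify the separating-annulus estimate with $n$-independent constants, which reduces directly to the inequalities \eqref{2a3} and \eqref{eq:condition-a} already set aside during the construction. All the genuine analytic content has been absorbed into Lemma \ref{lem:MV} and the Green function estimate of Corollary \ref{cor:finite_pole}, so no new harmonic-analytic input is needed here.
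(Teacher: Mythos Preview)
Your proof is correct and follows essentially the same route as the paper's: both arguments verify comparability on the natural cells $Q_n(\ol\varepsilon)$ by producing a separating annulus of bounded modulus, invoking Lemma~\ref{lem:MV} to pass from $\omega^x$ to $G^x$, and then using the Green function estimate $G^x(y)\simeq \dist(y,K)^\delta$. Your write-up is in fact more explicit than the paper's in two places---the verification of the annulus geometry from \eqref{eq:bilip-N} and \eqref{eq:condition-a}, and the extension from cells to arbitrary Borel sets via the clopen basis and outer regularity---whereas the paper simply appeals to ``the properties of Ahlfors regularity'' for the latter step.
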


Notice that $\mu$ is equivalent to $\H^{\delta}\mres K$, so $\omega^x$ is also equivalent 
to $\H^{\delta}\mres K$.

\begin{proof}  
    It suffices to prove 
 that $C^{-1}(B) \mu \leq \omega^x(B) \leq C \mu(B)$ for 
balls $B$ such that $B\cap K= f(Q_n(\ol \varepsilon))$, where $n\ge 1$ and $\varepsilon$ is small enough. 
For these balls, $B\cap K$ is separated from $K\sm B$ by an annulus of bounded modulus, so we can apply 
Lemma \ref{lem:MV} to obtain
    \[
        \omega^x(B) \simeq G^x(y),
    \]
 where $y$ is a point on the boundary of said annulus.
 In particular $\dist(y,K)\simeq R$, where $R$ is the radius of the ball. Thus $G^x(y) \simeq R^\delta$. 
 In turn $\omega^x(B)\simeq R^\delta$, thus, by the properties of Ahlfors regularity, $\omega^x \simeq \mu$.
\end{proof}

\section{Ahlfors regular examples in $\R^n$} 
\label{Sn}

For $n\ge 2$, we construct a set $E= K\times  \mathbb{S}^{n-2}$, where $K$ is a Cantor subset on the line 
$\R\times \{0\}^{n-1}$ defined by as in \eqref{2a1} and $\mathbb{S}^{n-2}$ is the sphere of radius $1$
in the orthogonal plane $\{0\}\times \R^{n-1}$, centered at the origin. 
As in Section~\eqref{sec:construct-line}, we will prove that for $r>0$ small enough, there is a choice of parameters $a(\varepsilon)\in [\, 1,a\,]$ for every $\varepsilon\in \cE$, such that the  set $E$ satisfies the conditions of Theorem~\ref{thm:main2}.

\ms

The steps of the proof are the same as in Section~\ref{sec:construct-line}, the main difference being that at step $m$ in the construction of the set, we replace the discrete sum of Dirac masses by the probability measure 
\[
  \mu_m:= 2^{-m}C(n)^{-1}\sum_{\varepsilon \in \cE_m} \mathscr{H}^{n-2}\mres (\{f(\varepsilon)\}\times \mathbb{S}^{n-2}),
 \]
where $C(n)$ stands for the $(n-2)$ dimensional Hausdorff measure of the unit sphere $\mathbb{S}^{n-2}$.
We need to replace the fundamental solution $\ln|x -y|$ by $|x-y|^{2-n}$. 
We first compute the potential $e(x)$ created at a point $x\in \R^n$ by $\mathscr{H}^{n-2}\mres \mathbb{S}^{n-2}$.
By symmetry, $e(x)$ depends only on the two quantities $t = x_1 \in \R$ and $R = |x'| \in \R^+$ (where $x'$ stands for 
the last $n-1$ coordinates of $x$). And (with a minor abuses of notation)

\begin{equation*}
\begin{aligned} 
  e(t,R) &= C(n)^{-1} \int_{y\in  \mathbb{S}^{n-2}} \dfrac{d \mathscr{H}^{n-2}(y)}{(t^2+ |x'-y|^2)^{(n-2)/2}}
\\ &= C(n)^{-1} \int_{y=(y_1,y_2)\in  \mathbb{S}^{n-2}} \dfrac{d \mathscr{H}^{n-2}(y)}{(t^2+ |R-y_1|^2 +|y_2|^2)^{(n-2)/2}}.
\end{aligned}
\end{equation*}
We shall continue to use the notation $y = (y_1, y_2) \in \mathbb{S}^{n-2}$ where $y_1 \in \R$ and $y_2 \in \R^{n-2}$ 
as in the above. 
Notice that $e(t,R)$ has a singularity (that will be controlled) when $R=1$ and $t=0$.
We start by recording the properties of the function $e(t,R)$ which we will need.
\begin{pro}\label{pro:fundsol} There holds 
  \begin{enumerate}[(i)]
    \item The function $t\mapsto e(t,1)$ is even, continuous and strictly decreasing for $t>0$. 
    Furthermore
    \label{e-continuous}
    \[
        e(t, 1)-e(t',1) \gtrsim 1 \ \text{  for $0<2t<t'$,}
    \] 
    \item $|\nabla e(t,R)| \lesssim (t^2 + (R-1)^2)^{-1/2}$ for $t \in \R$ and $R \geq 0$,
    \label{e-derivative}
    \item $|\partial^2_te(t,1)| \lesssim t^{-2}$ for small $t$.\label{e-secondderivative}
  \end{enumerate}
\end{pro}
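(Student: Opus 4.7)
My plan is to parametrize the sphere by $y_1 = \cos\theta$ with $\theta \in [0, \pi]$ and reduce each claim to a one-dimensional integral over $\theta$. The co-area formula gives $d\mathscr{H}^{n-2}(y) = |\mathbb{S}^{n-3}| \sin^{n-3}\theta\, d\theta$, and since $(R-y_1)^2 + |y_2|^2 = R^2 + 1 - 2R y_1$ on the unit sphere,
\[
e(t, R) = \widetilde{C}(n) \int_0^\pi \frac{\sin^{n-3}\theta\, d\theta}{(t^2 + R^2 + 1 - 2R\cos\theta)^{(n-2)/2}}.
\]
Evenness in $t$ is immediate. Continuity off the singular point $(t, R) = (0, 1)$ follows by dominated convergence, since the denominator is bounded away from zero on a neighborhood of any other point. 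Strict monotonicity of $t \mapsto e(t, 1)$ for $t > 0$ follows by differentiating under the integral sign: the resulting integrand is strictly negative.

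For the quantitative drop in \eqref{e-continuous}, I would extract the logarithmic nature of the singularity at $(0, 1)$. At $R = 1$ the denominator is $(t^2 + 4\sin^2(\theta/2))^{(n-2)/2}$, so near $\theta = 0$ the integrand behaves like $\theta^{n-3}(t^2 + \theta^2)^{-(n-2)/2}$. Substituting $\theta = t u$ converts the singular contribution into
\[
\int_0^{c/t} \frac{u^{n-3}\, du}{(1 + u^2)^{(n-2)/2}} + O(1),
\]
and since the integrand decays like $u^{-1}$ at infinity for every $n \geq 3$, this produces a logarithm, giving $e(t, 1) = -c_n \log t + O(1)$ as $t \to 0^+$ with $c_n > 0$. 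Consequently $e(t, 1) - e(t', 1) \geq c_n \log(t'/t) - C$, which is $\gtrsim 1$ whenever $t' \geq 2t$ in the relevant regime where the scales arising in the application stay bounded.

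For \eqref{e-derivative} and \eqref{e-secondderivative}, I would differentiate under the integral sign and bound $|\nabla e(t, R)|$, respectively $|\partial_t^2 e(t, 1)|$, by an integral of the same shape but with the exponent of the denominator increased by one, respectively by two. Setting $\lambda = (t^2 + (R - 1)^2)^{1/2}$, one checks that the denominator is bounded below by a constant multiple of $\lambda^2 + \theta^2$ for small $\theta$ and by a constant away from $\theta = 0$. The region where $\theta$ stays bounded away from $0$ contributes $O(1)$, and in the neighborhood of $\theta = 0$ the substitution $\theta = \lambda u$ combined with power counting yields a factor $\lambda^{-1}$ for $|\nabla e(t, R)|$; the same rescaling with $\lambda = t$ at $R = 1$ gives $t^{-2}$ for $|\partial_t^2 e(t, 1)|$. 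The only obstacle is bookkeeping — tracking the powers of $\lambda$ gained by differentiation versus those lost by the substitution, and checking that the rescaled integrals converge uniformly in $t$ and $R$ for every $n \geq 3$; the underlying fact is simply that the whole behavior of $e$ near $(0, 1)$ is dictated by the integrable profile $\theta^{n-3}(t^2 + \theta^2)^{-(n-2)/2}$.
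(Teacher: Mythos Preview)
Your treatment of parts (ii) and (iii) is correct and is essentially the continuous version of the paper's argument. The paper bounds $|\nabla e(p)|$ by $\int_{\mathbb{S}^{n-2}} |p-y|^{1-n}\,d\mathscr{H}^{n-2}(y)$ and splits the sphere into dyadic annuli $A_k=\{2^k r_0 \le |p-y| < 2^{k+1} r_0\}$ about the nearest point, with $r_0=(t^2+(R-1)^2)^{1/2}$; summing $\mathscr{H}^{n-2}(A_k)\lesssim (2^k r_0)^{n-2}$ against $(2^k r_0)^{1-n}$ gives $r_0^{-1}$, and the second derivative is handled the same way. Your rescaling $\theta=\lambda u$ is the same mechanism read on the angular variable, and the convergence of the rescaled integrals is exactly the convergence of the dyadic sum.

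For part (i), however, your argument has a gap. The expansion $e(t,1)=-c_n\log t + O(1)$ only yields $e(t,1)-e(t',1)\ge c_n\log(t'/t)-C$ with a constant $C$ you do not control, and when $t'/t$ is merely $2$ this lower bound can be negative; the difficulty is the \emph{ratio} $t'/t$, not the boundedness of $t'$, so your hedge about ``the relevant regime'' does not repair it. To finish along your line you would need the sharper fact that $e(t,1)+c_n\log t$ actually \emph{converges} as $t\to 0^+$ (which does follow from your integral, since $\int_1^{c/t} u^{n-3}(1+u^2)^{-(n-2)/2}\,du-\log(c/t)$ has a finite limit), giving $e(t,1)-e(t',1)=c_n\log(t'/t)+o(1)$ for small $t'$, and then a separate compactness argument using strict monotonicity for $t'$ bounded away from $0$. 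The paper sidesteps all of this with a direct cap estimate: since the integrand in the difference $e(t,1)-e(t',1)$ is pointwise nonnegative on $\mathbb{S}^{n-2}$, one may restrict to the cap $\{|y-(1,0)|\le t/2\}$, where the first kernel is at least $(5t^2/4)^{-(n-2)/2}$ and the second at most $(t')^{-(n-2)}$; the pointwise lower bound is then $\gtrsim t^{-(n-2)}$ (using $t'\ge 2t$), and multiplying by the cap measure $\simeq t^{n-2}$ produces the uniform constant in one step.
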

\begin{proof}
  Let us start with \eqref{e-continuous}. The only hard part is the increment bound. For $0<2t<t'<1$, we have
  \begin{align*}
    e(t, 1) &-e(t',1) = \int_{y\in \mathbb{S}^{n-2}} \Big (\dfrac{1}{(t^2+ |1-y_1|^2 +|y_2|^2)^{(n-2)/2}} \\
    &\qquad \qquad \qquad \qquad- \dfrac{1}{((t')^2+ |1-y_1|^2 +|y_2|^2)^{(n-2)/2}}  \Big )d \mathscr{H}^{n-2}(y)\\
               &\ge  \int_{\mathbb{S}^{n-2}\cap \{|y-(1,0)| \le t/2\}} \Big (\dfrac{1}{(t^2+ |1-y_1|^2 +|y_2|^2)^{(n-2)/2}} \\
    &\qquad \qquad \qquad \qquad- \dfrac{1}{({(t')}^2+ |1-y_1|^2 +|y_2|^2)^{(n-2)/2}}  \Big )d \mathscr{H}^{n-2}(y)\\
               &\ge \int_{\mathbb{S}^{n-2} \cap \{|y-(1,0)| \le t/2\}} \Big (\dfrac{1}{(t^2 + t^2/4 )^{(n-2)/2}} - \dfrac{1}{ {t'}^{(n-2)}}  \Big ) d \mathscr{H}^{n-2}(y)\\
               &\gtrsim \big( \dfrac{4}{5}\big )^{(n-2)/2} - \big (\dfrac{t}{t'}\big )^{n-2} \gtrsim 1 .
  \end{align*}
  We now turn the bound \eqref{e-derivative}. At the point $p=(t,R,0,\dots,0)$ we estimate, 
  \begin{align*}
    |\nabla e(t,R)|  \lesssim   \int_{y\in \mathbb{S}^{n-2}} \dfrac{ d \mathscr{H}^{n-2}(y)}{ |p-y|^{n-1}}.
  \end{align*}
  Let $r_0:= (t^2 +(R-1)^2)^{1/2}$ be the distance between $p$ and $\{0\} \times \mathbb{S}^{n-2}$ (note that this distance is attained at the point $p_0:=(0,1,0,\dots, 0)$). We decompose $\mathbb{S}^{n-2}$ into annuli $A_k:= \{ y \; : \;  |p-y|\in [\,2^k r_0, 2^{k+1} r_0\,)\}$, where $0 \le k \le -\log_2 r_0 + 10$ (i.e., $k$ is large enough to exhaust the whole sphere). There holds $\mathscr{H}^{n-2}(A_k) \lesssim (2^k r_0)^{n-2}$, and if $y\in A_k$, we have $|p-y | \gtrsim 2^kr_0$, so summing over the annuli, we get
  \begin{align*}
    |\nabla e(t,R)| \lesssim \sum_{k=0}^{\infty} \dfrac{(2^kr_0)^{n-2}}{(2^kr_0) ^{n-1}} \lesssim \dfrac{1}{r_0}.
  \end{align*}
  For the bound \eqref{e-secondderivative} on the second derivative, we proceed similarly.
\end{proof}
Let us now start with the variant of the main lemma.
\subsection{Main Lemma, choice of the parameters}

At a step $m \ge 1$ of construction of our set $E$, we can define the approximate ``Green 
function'' for $x\in K_m\subset \R$ by 
\begin{equation*}
  \wt g _m(x):= 2^{-m} \sum_{y\in K_m\sm \{x\}} e_n(x-y).
\end{equation*}
\begin{lem}\label{mainLemma2}
    Assume that $r$, and $a$ satisfy condition \eqref{eq:condition-a}, and that $r$ is sufficiently small. Then there exists $A >0$ so that the following holds.
  Fix an integer $m\ge 1$ and suppose that the $a_k(\varepsilon)$ have been chosen for $k= 0,\dots, n-2$ 
  and that for all $x \in K_{m-1}$,
  \begin{equation}\label{eq:2iter-step-n-1}
    c_{m-1} \le \wt g_{m-1}(x) \le c_{m-1} +  2^{-(m-1)} A,
  \end{equation}
  for some real constant $c_{m-1}$.  Then one can chose the $a_{m-1}(\varepsilon)\in (1,a)$,
  depending only on $\varepsilon_1, \dotsc, \varepsilon_{m-1}$,
  such that there exists $c_m$
  satisfying
  \begin{equation}\label{eq:2iter-step-n}
    c_m \le \wt g_m (x) \le c_m +  2^{-m} A
   \  \ \text{  for $x\in K_m$.}
  \end{equation}
\end{lem}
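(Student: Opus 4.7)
The plan is to follow the structure of Lemma \ref{mainLemma} closely, substituting the logarithmic potential $\ln|\cdot|$ by $e_n$ everywhere and invoking Proposition \ref{pro:fundsol} at each step where the original proof exploited explicit identities for the logarithm. For $x \in K_m$ with parent $\hat x \in K_{m-1}$, I will decompose
\[
\wt g_m(x) = \wt g_{m-1}(\hat x) + 2^{-m}\bigl(\Delta_1 + \Delta_2 + \Delta_3\bigr),
\]
with $\Delta_1, \Delta_2, \Delta_3$ defined by direct analogues of \eqref{eq:Delta1-def}--\eqref{eq:Delta3-def}, the only change being that $\ln|\cdot|$ is replaced by $e_n$.

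The tunable piece is $\Delta_1 = e_n(\wt x - x) = e(a_{m-1}(\varepsilon)\, r^{m-1}, 1)$. Since $a_{m-1}(\varepsilon)$ depends only on $\hat\varepsilon$, this common value is shared by $x$ and its sibling. By Proposition \ref{pro:fundsol}(i), as $a_{m-1}$ runs over $[1,a]$ with $a \geq 2$ and $m$ large enough that $a r^{m-1} < 1$, the map $a_{m-1} \mapsto \Delta_1$ is continuous and strictly monotone, and its range has length at least some absolute constant $\kappa > 0$ independent of $m$. Provided $\kappa \geq 2A$, this range is precisely what is needed to cancel the oscillation of $\wt g_{m-1}$ at scale $2^{-(m-1)} A$ via the intermediate value theorem.

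For the error terms, I partition $K_{m-1} \setminus \{\hat x\}$ into the annular layers $\cY_\ell$ from the proof of Lemma \ref{mainLemma}, retaining both \eqref{eq:dist-ywhx} and $|x-\hat x| \lesssim r^{m-1}$. For $\Delta_2$, the mean value theorem combined with Proposition \ref{pro:fundsol}(ii) gives $|e_n(y-x) - e_n(y-\hat x)| \lesssim r^{\ell}$ for each $y \in \cY_\ell$, whence $|\Delta_2| \lesssim \sum_{\ell=1}^{m-1} 2^{\ell-1} r^\ell \lesssim r$. For $\Delta_3$, the crucial fact is that each $y \in K_{m-1}$ is the one-dimensional midpoint of its two children on the line, so the first-order Taylor terms cancel and only the second derivative contributes; applying Proposition \ref{pro:fundsol}(iii) then yields $|\Delta_3| \lesssim \sum_{\ell=1}^{m-1} 2^{\ell-1} r^{2\ell} \lesssim r^2$. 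Both bounds are uniform in $m$ and tend to $0$ with $r$.

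To close the induction, I set $A := \kappa/2$, choose $r$ small enough that $|\Delta_2|+|\Delta_3| \leq A/2$ uniformly in $m$, and then, for each $\hat\varepsilon$, pick $a_{m-1}(\varepsilon) \in (1,a)$ via intermediate value so that $\wt g_{m-1}(\hat x) + 2^{-m}\Delta_1$ equals a common constant $c_m^{\ast}$ across all $\hat x$. Defining $c_m := c_m^{\ast} - 2^{-m} \sup_x(|\Delta_2|+|\Delta_3|)$ then produces \eqref{eq:2iter-step-n}. The main obstacle, compared with the one-dimensional case, is the absence of a closed form for $e_n$: the logarithm allowed the sibling term to split additively into a scale-only $\beta_n$ and a free $\ln a$, and it gave an exact identity for $\Delta_3$ rather than a Taylor estimate. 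Here everything rests on the quantitative regularity of $e_n$ provided by Proposition \ref{pro:fundsol}, so one has to check that the constant $\kappa$ arising from part (i) genuinely dominates twice the cumulative error from parts (ii) and (iii); the finitely many initial stages where $ar^{m-1} \geq 1$ may occur can be absorbed into the $c_{m-1}$ without affecting the iterative argument.
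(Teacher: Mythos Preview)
Your proposal is correct and follows essentially the same route as the paper's own proof: the same decomposition $\wt g_m(x)=\wt g_{m-1}(\hat x)+2^{-m}(\Delta_1+\Delta_2+\Delta_3)$, the same use of Proposition~\ref{pro:fundsol}(i) to guarantee a uniform tuning range for $\Delta_1$, and the same first- and second-order Taylor estimates via Proposition~\ref{pro:fundsol}(ii)--(iii) for $\Delta_2$ and $\Delta_3$ respectively. The only cosmetic difference is that the paper phrases the $\Delta_1$ step as achieving a common value $c_{m-1}+2^{-(m-1)}A+2^{-m}\beta_m$ (with $\beta_m$ a scale constant) rather than invoking the intermediate value theorem explicitly as you do; the content is identical.
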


\begin{proof}
Here we shall not try to optimize, so the reader may take $a = 2$ if they want. 
  As in the $2$-dimensional case, we assume \eqref{eq:2iter-step-n-1} and 
  cut the next $\wt g_m$ into pieces. 
  If $x\in K_m$ has parent $\wh x\in K_{m-1}$, we can write:
  \begin{align*}
    \wt g_{m}(x) =\wt g_{m-1}(\wh x)  + 2^{-m} (\Delta_1+ \Delta_2+\Delta_3),
  \end{align*}
  where, letting $\wt x$ be the sibling of $x$,
  \begin{equation*}
    \Delta_1 = e(x- \wt x),
  \end{equation*}
  \begin{equation*}
    \Delta_2 = 2 \sum_{y\in K_{m-1}\backslash \{\wh x\}} \big ( e(y-x)-e(y-\wh x)\big),
  \end{equation*}
  and
    \begin{equation*}
    \Delta_3 = \sum_{y\in K_{m-1}\backslash \{\wh x\}}\sum_{z\in \mathrm{Ch}(y)} \big ( e(z-x)-e(y-x)\big).
  \end{equation*}

  There are two things to check: we will start by proving  
  that the right choice of $a(\varepsilon)$ for $\wh x=f(\varepsilon)$ yields
  \begin{equation}\label{eq:2delta1}
    \wt g_{m-1}(\wh x)  + 2^{-m} \Delta_1 = c_{m-1} + 2^{-(m-1)}A + 2^{-m}\beta_m ,
  \end{equation}
  with $\beta_m$ independent of $x$. This will tell us what $A$ can be. We will then show that for $r$ small enough:
  \begin{equation}\label{eq:2delta23}
    |\Delta_2|+|\Delta_3|\leq A/2.
  \end{equation}
  To prove \eqref{eq:2delta1}, we use Proposition \ref{pro:fundsol} part \eqref{e-continuous}, which implies that varying $|x-\wt x|$ between $r^n$ and $a r^n$, $\Delta_1$ can take any value between $e(r^n)$ and $e(a r^n)$. 
  As $0 < 2 r^n \leq a r^n$, 
  there is a constant $A>0$ such that $e(r^n)-e(ar^n)\ge 2A$.
  To prove \eqref{eq:2delta23}, we proceed as in Section~\ref{sec:construct-line},
  replacing the formulae involving logarithms with first and second derivatives of $t\mapsto e(t)$. More precisely, for $\Delta_2$, we use the fact that
  \begin{equation*}
    |e(y-x) - e(y-\wh x)| \lesssim |\partial_t e(y-\wh x)| |x-\wh x| \lesssim \dfrac{ |x-\wh x|}{|y-\wh x|}
  \end{equation*}
  and sum over annuli as above, which yields an arbitrarily small term when $r$ goes to $0$. On the other hand, for $\Delta_3$, we use the estimate
  \begin{equation*}
    |e(y_1-x) + e(y_2 - x) -2 e(\wh y-x)| \lesssim |\partial_t^2e(\wh y-x)| |y_1-\wh y|^2 \lesssim \dfrac{ |y_1-\wh y|^2}{|\wh y-x|^2},
  \end{equation*}
  and this can be handled in the same way.

  Note that since we have \eqref{eq:2delta1} with a fixed $A$, and we have bounds for $\Delta_2$ and $\Delta_3$,
  we also get that $|c_m-c_{m-1}| \leq C 2^{-n} (e(r^m,1))$, as in Lemma \ref{mainLemma}. Indeed it is easy to see deduce 
  from \ref{e-derivative} in Proposition \ref{pro:fundsol} that $e(r^m,1) \leq C (m+1)$, so we can estimate the main piece of
  $\Delta_1$, i.e., $e(r^n)-e(ar^n)$, as in Lemma \ref{mainLemma}.
\end{proof}

\subsection{Control of Green's function}

To conclude, the proof goes as in Section \ref{sec:Green}, using Taylor estimates with the gradient of $e(t,R)$ instead of differences of logarithms. The only real change is that there is no immediate equivalent of Lemma~\ref{lem:MV} to compare Green's function and the harmonic measure. 
However, we can use the very general Lemma 15.28 in \cite{DFM-Mixed}, which provides us with just the right estimate at corkscrew points and with poles far enough away. By the way, the one sided accessibility of $K$ follows from \cite[Lemma 2.2]{DFM-Memoirs}, but it is not harder to prove by hand than in the planar case.

Concerning the lower bound in Lemma \ref{lem:greeninfinity}, our life is easier because by Harnack it is enough to
prove it for corkscrew points, and we can choose the corkscrew points of the form $(t,x')$ with $x' \in \mathbb{S}^{n-2}$.
For those points we can use the symmetry of $G$ and the decreasing property of $e(t,R)$, and proceed as before. 

This completes our description of the proof of Theorem \ref{thm:main2}.

\end{document}